\title[Global well-posedness  for JMGT equation]{Global well-posedness of the Cauchy problem for the Jordan--Moore--Gibson--Thompson equation with arbitrarily large higher-order Sobolev norms
} 
\subjclass[2010]{35L75, 35G25}
\keywords{nonlinear acoustics, relaxing media, asymptotic behavior, JMGT equation}
\author[B. Said-Houari ]{\bfseries Belkacem Said-Houari}
\address{  
	Department of Mathematics\\ College of Sciences\\ University of
Sharjah, P. O. Box: 27272 \\ Sharjah, United Arab Emirates}
\email{bhouari@sharjah.ac.ae}
\begin{document}  
\vspace*{-4mm} 
\maketitle        
\vspace*{-4mm}  
	
\vspace*{3mm}
\begin{abstract}    
   In this paper, we consider  the 3D  Jordan--Moore--Gibson--Thompson equation arising in nonlinear acoustics. First, we prove that the solution exists globally in time provided that    the lower order  Sobolev norms of the initial data are considered to be small, while the higher-order norms can be arbitrarily large. This improves some available results in the literature. Second, we prove a new decay estimate for the linearized model and  removing the $L^1$-assumption on the initial data.   The proof of this decay estimate is based on the high-frequency and  low-frequency decomposition of the solution together with an interpolation inequality related to Sobolev spaces with negative order.                        
\end{abstract}              
\vspace*{2mm}       
                
\section{Introduction}  
In this paper, we consider the nonlinear Jordan--Moore--Gibson--Thompson (JMGT) 
equation: 
\begin{subequations}
\label{Main_problem}
\begin{equation}
\tau u_{ttt}+u_{tt}-c^{2}\Delta u-\beta \Delta u_{t}=\frac{\partial }{%
\partial t}\left( \frac{1}{c^{2}}\frac{B}{2A}(u_{t})^{2}+|\nabla
u|^{2}\right) ,  \label{MGT_1}
\end{equation}%
where $x\in \mathbb{R}^{3}$ (Cauchy problem in  3D) and $t>0$ and $u=u(x,t)$ denotes the acoustic velocity potential.  We consider the
initial conditions
\begin{eqnarray}
u(t=0)=u_{0},\qquad u_{t}(t=0)=u_{1}\qquad u_{tt}(t=0)=u_{2}.
\label{Initial_Condition}
\end{eqnarray}
\end{subequations}

The JMGT equation with different types of damping mechanisms has received a substantial amount of attention in recent years and this because of its wide applications in  medicine and industry~\cite{maresca2017nonlinear, he2018shared, melchor2019damage, duck2002nonlinear}.

Equation \eqref{MGT_1} is an alternative model to the classical  Kuznetsov equation 
\begin{equation}\label{Kuznt}
u_{tt}-c^{2}\Delta u-\beta \Delta u_{t}=\frac{\partial }{%
\partial t}\left( \frac{1}{c^{2}}\frac{B}{2A}(u_{t})^{2}+|\nabla
u|^{2}\right),
\end{equation}
where $u=u(x,t)$ represents the acoustic velocity potential for $x \in \R^3$ and $t>0$; see~\cite{kuznetsov1971equations}. The equation \eqref{Kuznt} can be obtained as an approximation of the governing equations of fluid mechanics by means of asymptotic expansions in powers of small parameters; see~\cite{crighton1979model,Coulouvrat_1992, kuznetsov1971equations, kaltenbacher2007numerical}.
The constants $c>0$ and $\beta >0$ are the speed and the diffusivity of sound, respectively. The parameter of nonlinearity $B/A$ arises in the Taylor expansion of the variations of pressure in a medium in terms of the variations of density; cf.~\cite{beyer1960parameter}. The extra term $\tau u_{ttt}$ appearing in \eqref{MGT_1} is due to the replacement of the Fourier law of heat conduction in the equation of the   conservation of energy by the
 Cattaneo (or Maxwell--Cattaneo)
 law  which
accounts for finite speed of propagation of the heat transfer and
eliminates the paradox of infinite speed of propagation for pure heat
conduction associated with the Fourier law.

The starting point  of the nonlinear analysis lies in the results for the linearization
\begin{equation}\label{MGT_2_1}
\tau u_{ttt}+ \alpha u_{tt}-c^{2}\Delta u-\beta \Delta u_{t}=0.
\end{equation}
This equation is known as the Moore--Gibson--Thompson equation (although, as mentioned in \cite{bucci2019feedback}, this model originally appears in the work of Stokes~\cite{stokes1851examination}).	Interestingly, equation \eqref{MGT_2_1} also arises in viscoelasticity theory under the name of \emph{standard linear model} of vicoelasticity;~see \cite{Gorain_2010} and references given therein.

Equation \eqref{MGT_2_1} has been extensively studied lately; see, for example \cite{bucci2019feedback, bucci2019regularity, Chen_Palmieri_1, conejero2015chaotic,Kal_Las_Mar,Lizama_Zamorano_2019, Trigg_et_al, PellSaid_2019_1, P-SM-2019}, and the references therein.  
In particular in \cite{Kal_Las_Mar} (see also \cite%
{kaltenbacher2012well}), the authors considered the linear equation in bounded domains
\begin{equation}  \label{MGT_22}
\tau u_{ttt}+\alpha u_{tt}+c^{2}\mathcal{A} u+\beta \mathcal{A} u_{t}=0,
\end{equation}
where $\mathcal{A}$ is a positive self-adjoint operator. They proved that when the diffusivity of the sound is strictly
positive ($\beta>0$), the linear dynamics is described by a strongly continuous
semigroup, which is exponentially stable provided the dissipativity
condition $\gamma:=\alpha-\tau c^2/\beta>0$ is fulfilled.
  The study of the controllability properties of the MGT type equations can be found for instance in \cite{bucci2019feedback, Lizama_Zamorano_2019}.
The MGT equation in $\R^N$ with a power source nonlinearity of the form $|u|^p$  has been considered in \cite{Chen_Palmieri_1}  where some blow up results have been shown for the critical case $\tau c^2=\alpha\beta$.

The MGT and JMGT equations with a memory term have been also investigated recently. For the  MGT with memory, the reader is refereed to \cite{Bounadja_Said_2019,Liuetal._2019,dell2016moore} and to \cite{lasiecka2017global,nikolic2020mathematical,Nikolic_SaidHouari_2} for the JMGT with memory.  
The singular limit problem when $\tau\rightarrow 0$ has been rigorously justified  in \cite{KaltenbacherNikolic}. The authors in \cite{KaltenbacherNikolic} showed that in bounded domain,    the limit of \eqref{MGT_1}  as $\tau \rightarrow 0$ leads to the Kuznetsov equation (i.e., Eq \eqref{MGT_1} with $\tau=0$).
Concerning the large time asymptotic stability, the author and Pellicer showed in   \cite{PellSaid_2019_1}   the following  decay estimate of the solution of the Cauchy problem associated to \eqref{MGT_2_1}: 
        \begin{align}\label{marta decay}
        \Vert V(t)\Vert _{L^{2}(\R^{N})}\lesssim  (1+t)^{-N/4}\big(\Vert V_{0}\Vert _{L^{1}(\R^{N})}+\Vert V_{0}\Vert _{L^{2}(\R^{N})}\big) .
        \end{align}
 with  $V=(u_{t}+\tau u_{tt},\nabla(u+\tau u_{t}),\nabla u_{t})$. The method used to prove \eqref{marta decay} is based on a pointwise energy estimates in the Fourier space together with suitable asymptotic integral estimates.  The decay rate in \eqref{marta decay} under the $L^1$ assumption on the initial data seems sharp since it matches the decay rate of the heat kernel.

The global well posedness and large time behaviour of the solution to the Cauchy problem associated to the nonlinear 3D model  \eqref{Main_problem} has been recently investigated in \cite{Racke_Said_2019}. More precisely,   under  the assumption $0<\tau c^2<\beta$ and  by using the
contraction mapping theorem in appropriately chosen spaces, the authors  showed a local
existence result in some appropriate functional spaces.  In addition  using a bootstrap argument,  a global existence result and decay estimates for the solution with small initial
data were  proved. The decay estimate obtained in \cite{Racke_Said_2019} agrees with the one of the linearized models  given in \cite{PellSaid_2019_1}.  

Our main goal in this paper is first to improve the global existence result in \cite{Racke_Said_2019} by removing the smallness assumption on the higher-order Sobolev norms. More precisely, we only assume  the lower-order  Sobolev norms of initial data to be small, while the higher-order norms can be arbitrarily large. To achieve this,  and inspired by \cite{Guo_Wang_2012} we use different estimates than those in \cite{Racke_Said_2019} in order to control the nonlinearity in more precise way.  
 Second, as in \eqref{marta decay}, to prove the decay rate of the solution, it is common to take initial data to be  in $L^1(\R^n)$ and combine this with energy estimates in $H^s(\R^n), \, s\geq 0$ . However, this may create some difficulties, especially for the nonlinear problems since in some situations it is important to propagate this assumption of the  $L^1$-initial data over time, which is  not always the case.  Hence, it is very important to replace  this $L^1$ space by  $\dot{H}^{-\gamma},\, \gamma>0$ which is an $L^2$-type space. In fact, we prove (see Theorem \ref{Theorem_Decay}) instead of \eqref{MGT_2_1}, the following decay estimate:  
\begin{equation}\label{Decay_Negative_Norm}
\Vert V(t)\Vert_{L^2}\lesssim (1+t)^{-\gamma},\quad \gamma>0.    
\end{equation}
provided that the initial data are in $\dot{H}^{-\gamma}(\R^N)\cap L^2(\R^N)$.  The proof of the decay estimate \eqref{Decay_Negative_Norm} is based on the high-frequency and  low-frequency decomposition of the solution together with an interpolation inequality related to Sobolev spaces with negative order (see Lemma \ref{Lemma_gamma_Interpo} below). In fact, we prove that the low-frequency part of the solution behaves similarly to the solution of the heat equation 
\begin{equation}\label{Heat_Eq}
\partial_t\psi-\Delta \psi=0, \quad \text{in}\quad \R^3
\end{equation}
and hence, we recover  the decay rate in \cite{Guo_Wang_2012} for equation  \eqref{Heat_Eq} in the Sobolev space of a negative order. For the  high-frequency part, we show that it follows the decay rate of the ``toy" model 
\begin{equation}
\partial_t\psi+\psi=0,\quad  \text{in}\quad \R^3, 
\end{equation}
which is an exponential decay rate. 

The rest of this paper is organized as follows:  Section~\ref{Sec:Preliminaries} contains the necessary theoretical preliminaries, which allow us to rewrite the equation with the corresponding initial data as a  first-order Cauchy problem and define the main energy norm with the associated dissipative norm. We also recall a local well-posedness result from \cite{Racke_Said_2019}.  In Section~\ref{Sec:Mair_Result}, we state and discuss  our main result. Section~\ref{Section_Global_Existence} is devoted to the proof of the global existence result.  
 Section~\ref{Sec: Decay_Linearized} is dedicated to the proof of the decay estimates of the linearized problem. In Appendix \ref{Appendix_Usefull_Ineq},  we present the    Gagliardo--Nirenberg inequality together with some Sobolev interpolation inequalities that we used in the proofs. 
 
\subsection{Notation} Throughout the paper, the constant $C$ denotes a generic positive constant
that does not depend on time, and can have different values on different occasions.
We often write $f \lesssim g$ where there exists a constant $C>0$, independent of parameters of interest such that   $f\leq C g$ and we analogously define $f\gtrsim g$. We sometimes use the notation $f\lesssim_\alpha g$ if we want to emphasize that the implicit constant depends on some parameter $\alpha$. The notation $f\approx g$ is used when there exists a constant $C>0$ such that $C^{-1}g\leq f\leq Cg$.

\section{Preliminaries} \label{Sec:Preliminaries}
We  rewrite the right-hand side of equation \eqref{MGT_1} in the form
\begin{equation}
\frac{\partial }{\partial t}\left( \frac{1}{c^{2}}\frac{B}{2A}%
(u_{t})^{2}+|\nabla u|^{2}\right) =\frac{1}{c^{2}}\frac{B}{A}%
u_{t}u_{tt}+2\nabla u \cdot \nabla u_{t},
\end{equation}
and introduce the new variables
\begin{equation}
v=u_{t}\qquad \text{ and }\qquad w=u_{tt},
\end{equation}%
and without loss of generality, we assume from now on that
$
c=1.
$
Then equation \eqref{MGT_1} can be rewritten as the following first-order system 
\begin{subequations}\label{Main_System_First_Order}
\begin{equation}
\left\{
\begin{array}{ll}
u_{t}=v, &  \\
v_{t}=w, &  \\
\tau w_{t}=\Delta u+\beta \Delta v-w+\dfrac{B}{A}vw+2\nabla u \cdot \nabla v , &
\end{array}%
\right.  \label{System_New}
\end{equation}
with the initial data \eqref{Initial_Condition} rewritten as 
\begin{eqnarray}  \label{Initial_Condition_2}
u(t=0)=u_0,\qquad v(t=0)=v_0,\qquad w(t=0)=w_0.
\end{eqnarray}
\end{subequations}

Let  $\mathbf{U}=(u,v,w)$ be the solution of \eqref{Main_System_First_Order}.  In order to state our main result, for $k \geq 0$, we introduce the energy $\mathcal{E}_k[\mathbf{U}](t)$ of order $k$ and the corresponding dissipation $\mathcal{D}_k\mathbf{U}(t)$ as
follows:%

\begin{equation}\label{Weighted_Energy}
\begin{aligned}
\mathcal{E}^2_{k}[\mathbf{U}](t)=&\,\sup_{0\leq \sigma\leq t}\Big(\big\Vert \nabla^k(v+\tau
w)(\sigma)\big\Vert _{H^{1}}^{2}+\big\Vert \Delta \nabla^k v(\sigma)\big\Vert
_{L^{2}}^{2}+\big\Vert \nabla^{k+1} v(\sigma)\big\Vert _{L^{2}}^{2}\Big.%
\vspace{0.2cm}  \notag \\
 \Big.&+\big\Vert \Delta \nabla^k(u+\tau v)(\sigma)\big\Vert
_{L^{2}}^{2}+\big\Vert \nabla^{k+1} (u+\tau v)(\sigma)\big\Vert
_{L^{2}}^{2}+\Vert \nabla^kw(\sigma)\Vert_{L^2}^2\Big),  
\end{aligned}
\end{equation}
and
\begin{equation}\label{Dissipative_weighted_norm_1}
\mathcal{D}^{2}_k[\mathbf{U}](t) =\int_0^t\mathscr{D}^2_k(\sigma)\textup{d}\sigma 
\end{equation}
with 
\begin{equation}
\begin{aligned}
\mathscr{D}^2_k[\mathbf{U}](t)=&\, \begin{multlined}[t]\,\left(\big\Vert \nabla^{k+1}
v(t)\big\Vert _{L^{2}}^{2}+\big\Vert \Delta \nabla^k
v(t)\big\Vert
_{L^{2}}^{2}+ \Vert \nabla^kw (t)\Vert_{L^2}^2\right. \\
\left.+\big\Vert \Delta \nabla^k\left( u+\tau v\right)(t) \big\Vert
_{L^{2}}^{2} + \big\Vert \nabla^{k+1} (v+\tau w)(t)\big\Vert _{L^{2}}^{2}%
\right). \end{multlined}
\end{aligned}
\end{equation}

Let $V=(v +\tau w , \nabla( u +\tau v),\nabla v)$. It is clear that for all $t\geq 0$, 
\begin{equation}\label{Equiv_E_V}
\begin{aligned}
\mathcal{E}_k^{2}[\mathbf{U}](t)\approx&\, \Vert  \nabla^k V(t)\Vert_{L^2}^2 +\Vert \nabla^{k+1} V(t)\Vert_{L^2}^2+\Vert  w(t)\Vert_{L^2}^2.
\end{aligned} 
\end{equation}
For a positive integer
$s\geq 1$ that will be fixed later on, we define
\begin{eqnarray}\label{E_s_D_s_Def}
\mathrm{E}_s^2[\mathbf{U}](t)=\sum_{k=0}^s \mathcal{E}_k^2[\mathbf{U}](t)\qquad \text{and}\qquad \mathrm{D}_s^2[\mathbf{U}](t)=\sum_{k=0}^s \mathcal{D}_k^2[\mathbf{U}](t).
\end{eqnarray}
To introduce energies with negative indices, we first define the operator $\Lambda^\gamma$ for $\gamma\in \R $ by 
\begin{equation}
\Lambda^\gamma f(x)=\int_{\R^3} |\xi|^\gamma \hat{f}(\gamma) 2^{2i\pi x\cdot \xi} \, \textup{d}\xi,
\end{equation}
where $\hat{f}$ is the Fourier transform of $f$.  The homogenous Sobolev spaces $\dot{H}^\gamma$ consist of all $f$ for which 
\begin{equation}
\Vert f\Vert_{\dot{H}^\gamma}=\Vert\Lambda^\gamma f\Vert_{L^2}=\Vert |\xi|^\gamma \hat{f}\Vert_{L^2}
\end{equation}
is finite. 
We can then define the energy functional associated to the negative Sobolev spaces as
\begin{equation}
\begin{aligned}
\mathcal{E}_{-\gamma}^{2}[\mathbf{U}](t)=&\,\sup_{0\leq \sigma\leq t}\Big(\left\Vert \Lambda^{-\gamma}(v+\tau
w)(\sigma)\right\Vert _{H^{1}}^{2}+\left\Vert \Lambda^{-\gamma}\Delta  v(\sigma)\right\Vert
_{L^{2}}^{2}+\left\Vert \Lambda^{-\gamma}\nabla v(\sigma)\right\Vert _{L^{2}}^{2}\Big.%
\vspace{0.2cm}   \\
& \Big.+\left\Vert \Lambda^{-\gamma}\Delta (u+\tau v)(\sigma)\right\Vert
_{L^{2}}^{2}+\left\Vert \Lambda^{-\gamma}\nabla (u+\tau v)(\sigma)\right\Vert
_{L^{2}}^{2}+\Vert \Lambda^{-\gamma}w(\sigma)\Vert_{L^2}^2\Big).  \label{Weighted_Energy_gamma}
\end{aligned}
\end{equation}
The associated dissipative term is given by
\begin{eqnarray}
\hspace{0.8cm} \mathcal{D}^{2}_{-\gamma}[\mathbf{U}](t) &=&\int_0^t\Big(\Big\Vert \Lambda^{-\gamma}\nabla
v(\sigma)\Big\Vert _{L^{2}}^{2}+\left\Vert \Lambda^{-\gamma} \Delta
v(\sigma)\right\Vert
_{L^{2}}^{2}+ \Vert \Lambda^{-\gamma}w (\sigma)\Vert_{L^2}^2\Big.  \notag \\
&&\Big.+\left\Vert \Lambda^{-\gamma}\Delta \left( u+\tau v\right)(\sigma) \right\Vert
_{L^{2}}^{2} + \left\Vert \Lambda^{-\gamma}\nabla (v+\tau w)(\sigma)\right\Vert _{L^{2}}^{2}%
\Big)\textup{d}\sigma.  \label{Dissipative_weighted_norm_gamma}
\end{eqnarray}
 
In the following theorem, we recall a local well-posedness result obtained in \cite{Racke_Said_2019}. 
 \begin{theorem}[see Theorem 1.2 in \cite{Racke_Said_2019}]
\label{Local_Ex_Theorem} Assume that $0<\tau<\beta$ and let  $s>\frac{5}{2} $. Let $\mathbf{U}_0=(u_0,v_0,w_0)^T$ be such that
\begin{eqnarray}  \label{Upsilon_s_Assum}
\mathrm{E}_{s}^2[\mathbf{U}](0) &=&\left\Vert v_0+\tau w_0\right\Vert
_{H^{s+1}}^{2}+\left\Vert \Delta v_0\right\Vert _{H^{s}}^{2}+\left\Vert
\nabla v_0\right\Vert _{H^{s}}^{2}  \notag \\
&&+\left\Vert \Delta (u_0+\tau v_0)\right\Vert _{H^{s}}^{2}+\left\Vert
\nabla (u_0+\tau v_0)\right\Vert _{H^{s}}^{2}+\left\Vert
w_0\right\Vert _{H^{s}}^{2}\leq\tilde{\delta}_0
\end{eqnarray}  
for some $\tilde{\delta}_0>0$. Then, there exists a small time $%
T=T(\mathrm{E}_s(0))>0$ such that problem \eqref{Main_problem} has a unique
solution $u$ on $[0,T) \times \mathbb{R}^{3}$ satisfying
\begin{eqnarray*}
 \mathrm{E}_s^2[\mathbf{U}](T)+\mathrm{D}_s^2[\mathbf{U}](T)\leq C_{\tilde{\delta}_0},
\end{eqnarray*}
where $\mathrm{E}_s^2[\mathbf{U}](T)$ and $\mathrm{D}_s^2[\mathbf{U}](T)$ are given in \eqref{E_s_D_s_Def}, determining the regularity of $u$, and $C_{%
\tilde{\delta}_0}$ is a positive constant depending on $\tilde{\delta}_0$.
\end{theorem}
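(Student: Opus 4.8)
The plan is to construct the solution by the contraction mapping principle applied to a suitable linearization of the first-order system \eqref{System_New}, following the scheme of \cite{Racke_Said_2019}. The starting observation is that the third equation of \eqref{System_New} can be rewritten as
\[
\tau w_t + \Big(1 - \tfrac{B}{A}v\Big) w = \Delta u + \beta \Delta v + 2\nabla u \cdot \nabla v,
\]
so that the principal part $\tau w_t - \Delta u - \beta \Delta v$ is linear with constant coefficients and the nonlinearity enters only through the lower-order products $\tfrac{B}{A}vw$ and $\nabla u \cdot \nabla v$; the problem is thus semilinear, with the only variable coefficient being the zeroth-order damping factor $1-\tfrac{B}{A}v$. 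Given an iterate $\bar{\mathbf{U}}=(\bar u,\bar v,\bar w)$ in a ball $\{\mathrm{E}_s[\bar{\mathbf{U}}]\le R\}$, I would freeze this coefficient and the source, solving the linear problem $u_t=v$, $v_t=w$, $\tau w_t + (1-\tfrac{B}{A}\bar v)w = \Delta u + \beta\Delta v + 2\nabla\bar u\cdot\nabla\bar v$ with data \eqref{Initial_Condition_2} (existence for this linear Moore--Gibson--Thompson system being obtained by a Galerkin approximation together with the energy estimates below). This defines the solution map $\mathcal{T}\colon\bar{\mathbf{U}}\mapsto\mathbf{U}$, and the whole argument reduces to showing that $\mathcal{T}$ is a contraction on a small ball for $T$ small. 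A preliminary point is non-degeneracy of the damping: since $s>\tfrac52$ gives $H^s(\R^3)\hookrightarrow L^\infty$, the smallness $\mathrm{E}_s[\bar{\mathbf{U}}](0)\le\tilde\delta_0$ keeps $\|\bar v\|_{L^\infty}$ small, so $1-\tfrac{B}{A}\bar v$ stays close to $1$ and, together with $0<\tau<\beta$, the dissipativity condition $1-\tfrac{B}{A}\bar v-\tau/\beta>0$ is preserved.

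The heart of the matter is the a priori energy estimate showing that $\mathcal{T}$ maps the ball into itself. Here I would, for each $0\le k\le s$, apply $\nabla^k$ to the linearized equations and test them against the Moore--Gibson--Thompson multipliers that are built into the energy \eqref{Weighted_Energy}, namely the combinations $\nabla^k(v+\tau w)$ and $\Delta\nabla^k(u+\tau v)$. The condition $0<\tau<\beta$ (which, with $c=1$, is exactly the dissipativity relation) is what renders the resulting quadratic form equivalent to $\mathcal{E}_k^2$ and the damping terms nonnegative, yielding an identity of the schematic form $\tfrac{d}{dt}\mathcal{E}_k^2 + \mathscr{D}_k^2 \lesssim \mathcal{R}_k$, where $\mathcal{R}_k$ collects all contributions of the frozen source and of the commutators generated by the variable coefficient. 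Summing over $k$ and integrating in time then produces $\mathrm{E}_s^2[\mathbf{U}](t)+\mathrm{D}_s^2[\mathbf{U}](t)\lesssim \mathrm{E}_s^2[\mathbf{U}](0)+\int_0^t\sum_k\mathcal{R}_k\,\textup{d}\sigma$, with the dissipation $\mathrm{D}_s^2$ from \eqref{Dissipative_weighted_norm_1} appearing on the left.

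Controlling $\int_0^t\sum_k \mathcal{R}_k$ is where the work concentrates and where the hypothesis $s>\tfrac52$ is genuinely used. The source $2\nabla\bar u\cdot\nabla\bar v$ and the product $(\tfrac{B}{A}\bar v)\,w$, as well as the commutators $[\nabla^k,1-\tfrac{B}{A}\bar v]w$, must be estimated by Moser-type product and Kato--Ponce commutator estimates (together with the Gagliardo--Nirenberg inequalities recalled in the appendix), which require $H^s(\R^3)$ to be an algebra and the coefficient to lie in $W^{1,\infty}$; the embedding $H^s(\R^3)\hookrightarrow W^{1,\infty}$, valid precisely for $s>\tfrac52$, is what makes these bounds available. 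The outcome is a bound $\int_0^t\sum_k\mathcal{R}_k\,\textup{d}\sigma \lesssim T^{1/2}\,Q\big(R\big)$ for a polynomial $Q$, so that choosing first $R\approx\tilde\delta_0$ and then $T=T(\mathrm{E}_s(0))$ small makes $\mathrm{E}_s[\mathbf{U}]\le R$, i.e. $\mathcal{T}$ leaves the ball invariant.

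Finally I would prove the contraction property: the difference $\mathcal{T}\bar{\mathbf{U}}_1-\mathcal{T}\bar{\mathbf{U}}_2$ solves the same linear system with right-hand side equal to the difference of the nonlinear terms, which is bilinear in the iterates; repeating the energy estimate for this difference---if necessary in a norm one order lower than $\mathrm{E}_s$, to keep the coefficient/solution products balanced---gives a contraction factor $\lesssim T^{1/2}$, hence a genuine contraction after shrinking $T$. The Banach fixed-point theorem then yields a unique fixed point $\mathbf{U}$ on $[0,T)$, which is the desired solution, and the uniform bound $\mathrm{E}_s^2[\mathbf{U}](T)+\mathrm{D}_s^2[\mathbf{U}](T)\le C_{\tilde\delta_0}$ passes from the iterates to the limit. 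I expect the main obstacle to be the organization of the top-order estimate so that every commutator and product term in $\mathcal{R}_s$ is absorbed into the dissipation $\mathscr{D}_s$ rather than into the non-dissipative part of $\mathcal{E}_s$; this is exactly what forces the use of the specific multipliers $v+\tau w$ and $u+\tau v$ encoded in \eqref{Weighted_Energy} and \eqref{Dissipative_weighted_norm_1}, which are cosmetic only in appearance.
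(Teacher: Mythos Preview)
The paper does not prove this theorem at all: it is stated explicitly as a recall of Theorem~1.2 from \cite{Racke_Said_2019}, and no argument is given in the present paper. Your contraction-mapping outline is consistent with what the introduction says about the method used in \cite{Racke_Said_2019} (``by using the contraction mapping theorem in appropriately chosen spaces''), so there is nothing to compare beyond noting that your sketch is a plausible reconstruction of the cited proof rather than of anything appearing here.
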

\section{Main results}\label{Sec:Mair_Result}
\noindent In this section, we state and discuss  our main results. The global existence result is stated in Theorem \ref{Main_Theorem}, while the decay estimate for the linearized problem is given in Theorem \ref{Theorem_Decay}. 
\begin{theorem}\label{Main_Theorem}
Assume that $0<\tau<\beta$  and let $s \geq 3$ be an integer. Let $s_0=\max\{[2s/3]+1,[s/2]+2\}\leq m\leq s $. 
   Assume that $%
u_{0},v_0,w_0 $ are such that 
$\mathrm{E}_{s}[\mathbf{U}](0)<\infty$. 
Then there exists a small positive
constant $\delta ,$ such that if
\begin{equation}\label{Initial_Assumption_Samll}
\begin{aligned}
\mathrm{E}_{s_0}^2[\mathbf{U}](0) =&\,\left\Vert v_0+\tau w_0\right\Vert
_{H^{s_0+1}}^{2}+\left\Vert \Delta v_0\right\Vert _{H^{s_0}}^{2}+\left\Vert
\nabla v_0\right\Vert _{H^{s_0}}^{2}  \\
&+\left\Vert \Delta (u_0+\tau v_0)\right\Vert _{H^{s_0}}^{2}+\left\Vert  
\nabla (u_0+\tau v_0)\right\Vert _{H^{s_0}}^{2}+\left\Vert
 w_0\right\Vert _{H^{s_0}}^{2}\leq \delta,
\end{aligned}
\end{equation}
then problem  \eqref{Main_problem} admits a unique global-in-time solution satisfying 
\begin{equation}\label{Main_Energy_Estimate}
\mathrm{E}^2_{m}[\mathbf{U}](t)+\mathrm{D}_{m}^2[\mathbf{U}](t)
\leq \mathrm{E}^2_{m}[\mathbf{U}](0), \qquad t \geq 0,
\end{equation}
where $s_0\leq m\leq s$. 
\end{theorem}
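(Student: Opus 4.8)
The plan is to combine the local existence result of Theorem~\ref{Local_Ex_Theorem} with a uniform \emph{a priori} energy estimate and close everything by a continuation argument. Fix the integer $m$ with $s_0\le m\le s$. On any interval $[0,T]$ on which the local solution of \eqref{Main_System_First_Order} exists I would run a bootstrap: assume that $\sup_{[0,T]}\mathrm{E}_{s_0}[\mathbf{U}](\sigma)$ is small (of size $\lesssim\delta$) and that $\mathrm{E}_m^2[\mathbf{U}](t)+\mathrm{D}_m^2[\mathbf{U}](t)$ is finite. The objective is to improve both statements strictly, so that the usual open--closed argument forces $T=\infty$ and delivers \eqref{Main_Energy_Estimate}.

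The first ingredient is the energy--dissipation inequality for the linearized MGT operator at each order $k$. Applying $\nabla^k$ to \eqref{System_New} and testing against the combinations appearing in \eqref{Weighted_Energy} (namely $\nabla^k(v+\tau w)$, $\Delta\nabla^k(u+\tau v)$, $\nabla^k w$, and so on), integrating by parts, and adding Kawashima-type cross terms to generate dissipation for the ``potential'' component $u+\tau v$, one builds a Lyapunov functional $\mathcal{L}_k(t)\approx\mathcal{E}_k^2[\mathbf{U}](t)$ for which the dissipativity condition $0<\tau<\beta$ (equivalently $\gamma=1-\tau/\beta>0$ after normalizing $c=1$) yields
\[
\frac{d}{dt}\mathcal{L}_k(t)+c_0\,\mathscr{D}_k^2[\mathbf{U}](t)\le \mathcal{N}_k(t),
\]
where $\mathcal{N}_k$ gathers all contributions of the nonlinearity $\tfrac{B}{A}vw+2\nabla u\cdot\nabla v$. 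Summing over $0\le k\le m$ and integrating in time gives the master inequality $\mathcal{L}(t)+c_0\int_0^t\mathscr{D}^2\,d\sigma\le\mathcal{L}(0)+\int_0^t\sum_k\mathcal{N}_k\,d\sigma$, with $\mathcal{L}\approx\mathrm{E}_m^2$.

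The heart of the matter, and the step I expect to be the main obstacle, is bounding $\int_0^t\sum_k\mathcal{N}_k$ by $(\text{small})\cdot\mathrm{D}_m^2[\mathbf{U}](t)$ so that it can be absorbed by the dissipation on the left. Following the idea of \cite{Guo_Wang_2012}, I would treat the two ranges of $k$ differently. For $0\le k\le s_0$ every factor carries at most $s_0$ derivatives, so a direct Gagliardo--Nirenberg/Sobolev estimate (Appendix~\ref{Appendix_Usefull_Ineq}) puts one factor in $L^\infty$ and the remaining two into $\mathscr{D}_{s_0}$, giving $\mathcal{N}_k\lesssim\mathrm{E}_{s_0}^{1/2}\,\mathscr{D}_{s_0}^2$, which is exactly where smallness is exploited. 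For $s_0<k\le m$ one cannot assume smallness at level $k$; instead, using the Leibniz rule together with the product estimate
\[
\bigl\|\nabla^k(fg)\bigr\|_{L^2}\lesssim \bigl\|\nabla^k f\bigr\|_{L^2}\,\|g\|_{L^\infty}+\|f\|_{L^\infty}\,\bigl\|\nabla^k g\bigr\|_{L^2}
\]
and intermediate interpolations, I would split each product so that the factor carrying the top-order derivative ($>s_0$) always sits inside $\mathscr{D}_k$, while the complementary factor carries at most $s_0$ derivatives and is controlled by the small quantity $\mathrm{E}_{s_0}$. The two constraints defining $s_0=\max\{[2s/3]+1,[s/2]+2\}$ are precisely what make these interpolations close: the bound $s_0\ge[s/2]+2$ guarantees that in any split of $k\le s$ derivatives at least one factor stays at order $\le s_0$ (and, being $\ge 2$ in $3$D, controls $L^\infty$ through $H^2\hookrightarrow L^\infty$), while $s_0\ge[2s/3]+1$ supplies the extra regularity needed for the worst three-factor interpolation arising from the gradient nonlinearity $\nabla u\cdot\nabla v$.

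Once every nonlinear contribution is estimated by $C\bigl(\sup_{[0,t]}\mathrm{E}_{s_0}^{1/2}\bigr)\,\mathrm{D}_m^2[\mathbf{U}](t)$, the smallness of $\mathrm{E}_{s_0}$ lets me absorb it into $\tfrac{c_0}{2}\int_0^t\mathscr{D}^2$. Using $\mathcal{L}\approx\mathrm{E}_m^2$ and $\int_0^t\mathscr{D}^2=\mathrm{D}_m^2$, and recalling that the energies in \eqref{Weighted_Energy} are already suprema in time (so that the monotone Lyapunov functional is non-increasing along the flow), this produces the estimate \eqref{Main_Energy_Estimate}. Finally, specializing the same scheme to $m=s_0$ closes the bootstrap on the low-order norm: since $\mathrm{E}_{s_0}^2(0)\le\delta$ by \eqref{Initial_Assumption_Samll} and the nonlinearity is quadratically small, one gets $\mathrm{E}_{s_0}^2(t)+\mathrm{D}_{s_0}^2(t)\le 2\delta$, strictly improving the hypothesis. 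The continuation argument then yields the unique global solution together with the bound \eqref{Main_Energy_Estimate}.
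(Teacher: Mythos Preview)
Your overall architecture---bootstrap the small norm $\mathrm{E}_{s_0}$, derive a level-$k$ energy--dissipation inequality with nonlinear remainder, bound that remainder by $\varepsilon\,\mathrm{D}_m^2$, absorb, and continue the local solution---is correct and is exactly the skeleton of the paper's proof. Two differences in execution are worth flagging.

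First, your split into the ranges $0\le k\le s_0$ and $s_0<k\le m$ is unnecessary: the paper treats \emph{all} $1\le k\le s$ by a single mechanism. For each term $\nabla^{k-\ell}f\cdot\nabla^{\ell}g$ in the Leibniz expansion it applies the Gagliardo--Nirenberg interpolation (Lemma~\ref{Lemma_Sobol_Ga_Ni}) with a specific exponent $\theta=\theta(\ell,k)$ so that one factor lands on a \emph{fixed} low-order norm $\|\nabla^{m_i}(\cdot)\|_{L^2}$ (bounded by $\mathrm{E}_{s_0}$) and the other on the dissipation $\mathscr{D}_k$ or $\mathscr{D}_{k-1}$; there is no qualitative distinction between low and high $k$. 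Second, and more substantively, the value $s_0=\max\{[2s/3]+1,[s/2]+2\}$ does not come from your heuristic about derivative-splitting and three-factor products. It emerges from the explicit indices $m_0,\dots,m_5$ produced by these interpolations in Lemmas~\ref{Lemma_I_1}--\ref{Lemma_I_5}: maximizing over $\ell$ and $k\le s$ yields the constraints $s_0\ge [s/2]+1$, $s_0\ge [s/2]+2$, and the worst one $s_0\ge [2s/3]+1$ (this last from $m_3$ in the estimate of $\mathrm{\mathbf{I}}_{2;2}^{(k)}$ at $\ell=1$). Your Kato--Ponce product estimate together with ``intermediate interpolations'' would also close, but the paper deliberately avoids the packaged product/commutator estimate of \cite{Racke_Said_2019} in favor of this term-by-term interpolation precisely to make visible where each lower bound on $s_0$ originates.
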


In the following theorem, we state a decay estimate of the solution of the linearized problem associated to \eqref{Main_System_First_Order}. 
\begin{theorem}\label{Theorem_Decay}
  Let $\mathbf{U}$ be the solution of the linearized problem associated to \eqref{Main_System_First_Order}.  Assume that $0<\tau<\beta$.  Let $\gamma>0$ and let $\mathbf{U}(0)$ be such that  $%
\mathcal{E}^2_{-\gamma}[\mathbf{U}](0)< \infty$. Then, it holds that 
\begin{equation}\label{Boundedness_E_gamma}
\mathcal{E}^2_{-\gamma}[\mathbf{U}](t)+\mathcal{D}_{-\gamma}^2[\mathbf{U}](t)
\leq \mathcal{E}^2_{-\gamma}[\mathbf{U}](0).
\end{equation}
In addition,  the following decay estimate of the linearized problem hold:  
\label{Decay}
\begin{equation}\label{Decay_1}
\Vert V(t)\Vert_{L^2}\lesssim_{C_0} (1+t)^{-\gamma}.   
\end{equation}
Here $C_0$ is a positive constant that depends on the initial data, but is independent of $t$.
\end{theorem}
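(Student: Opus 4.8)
The plan is to carry out the entire analysis on the Fourier side. The linearized system associated with \eqref{Main_System_First_Order} is obtained by discarding the nonlinear terms $\tfrac{B}{A}vw$ and $2\nabla u\cdot\nabla v$, so that $u_t=v$, $v_t=w$, $\tau w_t=\Delta u+\beta\Delta v-w$; after a Fourier transform this becomes, for each frequency $\xi$, a constant-coefficient first-order ODE system for $\widehat{\mathbf U}(\xi,t)$. The two assertions are then proved with a single multiplier construction used at two different weights: the energy--dissipation identity \eqref{Boundedness_E_gamma} follows from a Lyapunov functional in Fourier space weighted by $|\xi|^{-2\gamma}$, while the decay \eqref{Decay_1} comes from the unweighted ($\gamma=0$) version of that same identity combined with the negative-order Sobolev interpolation inequality of Lemma \ref{Lemma_gamma_Interpo} and a Bernoulli-type differential inequality.

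For the boundedness \eqref{Boundedness_E_gamma}, the key observation is that $\Lambda^{-\gamma}$ is a Fourier multiplier commuting with the constant-coefficient linear operator. Hence, running the standard energy--multiplier computation for the MGT system (as in \cite{PellSaid_2019_1,Racke_Said_2019}) on $\Lambda^{-\gamma}\mathbf U$ instead of $\mathbf U$ produces, after symmetrization, a weighted Lyapunov functional $\mathcal L_{-\gamma}(\xi,t)\approx|\xi|^{-2\gamma}\mathcal L_0(\xi,t)$, equivalent to the integrand of $\mathcal E_{-\gamma}^2$, which obeys a pointwise differential inequality $\partial_t\mathcal L_{-\gamma}+c\,\rho(\xi)\,\mathcal L_{-\gamma}\le 0$ with dissipation rate $\rho(\xi)=|\xi|^2/(1+|\xi|^2)$. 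Integrating in $\xi$ gives $\tfrac{d}{dt}\mathcal E_{-\gamma}^2+\mathscr D_{-\gamma}^2\le 0$, and a further integration in $t$ yields \eqref{Boundedness_E_gamma}; in particular $\mathcal E_{-\gamma}(t)\le\mathcal E_{-\gamma}(0)=:C_0$ for all $t\ge 0$, so the homogeneous negative norms of the components of $V$ stay uniformly bounded.

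To prove the decay \eqref{Decay_1}, I would take $\gamma=0$ in the same computation to obtain the instantaneous energy identity $\tfrac{d}{dt}E^2(t)+\mathscr D_0^2[\mathbf U](t)\le 0$, where $E^2(t)$ is the quantity inside the supremum defining $\mathcal E_0^2$. By \eqref{Equiv_E_V} one has $E^2\approx\Vert V\Vert_{L^2}^2+\Vert\nabla V\Vert_{L^2}^2+\Vert w\Vert_{L^2}^2$, while the definition of $\mathscr D_0^2$ shows the dissipation already controls $\Vert\nabla V\Vert_{L^2}^2+\Vert w\Vert_{L^2}^2$ together with $\Vert\nabla v\Vert_{L^2}^2$; the only pieces of $\Vert V\Vert_{L^2}^2$ not dominated by $\mathscr D_0^2$ are $\Vert v+\tau w\Vert_{L^2}^2$ and $\Vert\nabla(u+\tau v)\Vert_{L^2}^2$. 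Each I would estimate by Lemma \ref{Lemma_gamma_Interpo} in the form $\Vert f\Vert_{L^2}\lesssim\Vert\Lambda^{-\gamma}f\Vert_{L^2}^{\theta}\Vert\nabla f\Vert_{L^2}^{1-\theta}$ with $\theta=1/(1+\gamma)$, using the uniform bound $\Vert\Lambda^{-\gamma}f\Vert_{L^2}\le C_0$ from the first part (note that $\Lambda^{-\gamma}(v+\tau w)$ and $\Lambda^{-\gamma}\nabla(u+\tau v)$ both appear in $\mathcal E_{-\gamma}^2$) and $\Vert\nabla f\Vert_{L^2}^2\lesssim\mathscr D_0^2$. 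This gives $E^2\lesssim\mathscr D_0^2+C_0^{2\theta}(\mathscr D_0^2)^{1-\theta}$, the second term dominating once $E$ is small, hence $\mathscr D_0^2\gtrsim C_0^{-2/\gamma}(E^2)^{1+1/\gamma}$, so the energy identity becomes $\tfrac{d}{dt}E^2+c\,C_0^{-2/\gamma}(E^2)^{1+1/\gamma}\le 0$. Integrating this Bernoulli-type inequality yields $E^2(t)\lesssim C_0^2(1+t)^{-\gamma}$, and since $\Vert V(t)\Vert_{L^2}^2\le E^2(t)$ this is the decay rate asserted in \eqref{Decay_1}.

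The main obstacle, and the place where the high-/low-frequency decomposition is essential, is establishing the sharp pointwise dissipation rate $\rho(\xi)=|\xi|^2/(1+|\xi|^2)$ uniformly in $\xi$ for the third-order-in-time MGT symbol. For $0<\tau<\beta$ the roots of $\tau\lambda^3+\lambda^2+\beta|\xi|^2\lambda+|\xi|^2=0$ have real parts behaving like $-|\xi|^2$ as $|\xi|\to0$ (the heat-like mode, matching \eqref{Heat_Eq}) and staying bounded away from $0$ as $|\xi|\to\infty$ (the exponentially damped regime); capturing both uniformly forces one to split the multiplier construction into the two frequency bands and to choose the cross-terms in the Lyapunov functional so that the dissipation is coercive in each band. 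A secondary technical point is to verify that the interpolation exponent $\theta=1/(1+\gamma)$ produces exactly the power $1+1/\gamma$ in the differential inequality, since this is precisely what converts the preserved negative norm into the algebraic decay $(1+t)^{-\gamma}$ and recovers, in the spirit of \cite{Guo_Wang_2012}, the heat-equation rate without any $L^1$ hypothesis.
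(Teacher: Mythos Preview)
Your argument for \eqref{Boundedness_E_gamma} matches the paper's (Proposition~\ref{Proposition_gamma}): apply $\Lambda^{-\gamma}$, which commutes with the constant-coefficient linear operator, and rerun the energy method.

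For the decay \eqref{Decay_1} your route is correct but genuinely different from the paper's. The paper works with the Fourier-side Lyapunov functional $\hat L(\xi,t)\approx|\hat V(\xi,t)|^2$ from \cite{PellSaid_2019_1}, whose spatial integral is only $\approx\|V\|_{L^2}^2$, and then \emph{must} split into frequency bands: for $|\xi|\gtrsim R$ the rate $\rho(\xi)=|\xi|^2/(1+|\xi|^2)$ is bounded below and one gets exponential decay of $V^{\mathrm H}$, while for $|\xi|\lesssim R$ the integrated inequality is $\tfrac{d}{dt}\mathcal L^{\mathrm L}+c\|\nabla V^{\mathrm L}\|_{L^2}^2\le 0$ and Lemma~\ref{Lemma_gamma_Interpo} applied to the whole vector $V^{\mathrm L}$ closes the Bernoulli inequality. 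You instead use the \emph{larger} physical-space energy $E^2\approx\|V\|_{L^2}^2+\|\nabla V\|_{L^2}^2+\|w\|_{L^2}^2$, whose dissipation $\mathscr D_0^2$ already dominates $\|\nabla V\|_{L^2}^2+\|w\|_{L^2}^2$ outright; only the two lowest-order scalar pieces $\|v+\tau w\|_{L^2}$ and $\|\nabla(u+\tau v)\|_{L^2}$ need the interpolation of Lemma~\ref{Lemma_gamma_Interpo}. In effect, the extra $\|\nabla V\|_{L^2}^2$ term in your energy plays the role of the paper's high-frequency part, so no cutoff is needed. Both approaches yield $E^2(t)\lesssim(1+t)^{-\gamma}$ and hence $\|V(t)\|_{L^2}\lesssim(1+t)^{-\gamma/2}$, which is also what the paper's own computation produces (see \eqref{Decay_Low_Fre}; the exponent in the statement \eqref{Decay_1} appears to be stated as $-\gamma$ where the proof gives $-\gamma/2$). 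Your step ``$\mathscr D_0^2\gtrsim C_0^{-2/\gamma}(E^2)^{1+1/\gamma}$ once $E$ is small'' is fine because $\mathscr D_0^2\lesssim E^2$, so small $E$ forces the interpolation term to dominate; for large $E$ one has $\mathscr D_0^2\gtrsim E^2$ directly, and $E^2\le E^2(0)$ supplies the missing power.

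One correction to your final paragraph: the high/low split is \emph{not} needed to obtain the uniform pointwise rate $\rho(\xi)=|\xi|^2/(1+|\xi|^2)$; that inequality holds for a single Lyapunov functional valid for all $\xi$ (this is exactly \eqref{Lyapunov_main_Linear}, taken from \cite[(3.22)]{PellSaid_2019_1}). In the paper the split enters only \emph{after} this step, precisely because their integrated functional is $\approx\|V\|_{L^2}^2$ and lacks the $\|\nabla V\|_{L^2}^2$ contribution that your larger energy $E^2$ supplies for free.
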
 
\subsection{Discussion of the main result}
 Before moving onto the proof, we briefly discuss the statements made above in Theorems \ref{Main_Theorem} and \ref{Theorem_Decay}. 
 \begin{itemize}
\item Similarly to the result in \cite{Guo_Wang_2012}, we only assume  the lower-order  Sobolev norms of initial data to be small, while the higher-order norms can be arbitrarily large. This improves the recent result of \cite[Theorem 1.1]{Racke_Said_2019} where all the norms up to order $s$ are assumed to be small. To do this, and inspired by \cite{Guo_Wang_2012}, we employ different techniques to tackle nonlinear terms rather than the usual commutator estimates. More precisely, we use Sobolev interpolation of the Gagliardo--Nirenberg inequality between higher-order and lower-order spatial derivatives to tackle the nonlinear terms. 
  
\item The decay rate for the linearized  equation obtained in \cite{PellSaid_2019_1} holds under the  assumption that the initial data $V_0\in L^1(\R^3)$. Theorem~\ref{Main_Theorem} does not require the initial data to be in $L^1(\R^3)$. Instead, we take the initial data to be in $H^{-\gamma}$, which is obvious due to \eqref{Boundedness_E_gamma}, that this norm is preserved.  This can be shown (under some restrictions on $\gamma$) to hold  also for the nonlinear problem. 
   However, it seems difficult to extend the decay estimate \eqref{Decay_1} to the nonlinear problem since the cut-off operators defined in  
\eqref{Cut-off_Operator} induces some commutators that are difficult to control by the lower frequency dissipative terms. The decay estimates of the nonlinear problem provided in \cite{Guo_Wang_2012} are  mainly based on an estimate of the form 
\begin{equation}
\frac{\textup {d}}{\textup {d}t}\Vert \nabla^\ell V(t)\Vert_{L^2}+\Vert \nabla^{\ell+1}V(t)\Vert_{L^2}\leq 0. 
\end{equation}
  Such estimate seems difficult to obtain in our situation due to the nature of our equation \eqref{Main_problem}. 

\item Theorem \ref{Theorem_Decay} holds for all $\gamma>0$.  The decay rate obtained in \cite{Guo_Wang_2012} is restricted to the case $\gamma\in [0,3/2)$, this restriction is needed to control the nonlinear terms.  
 
\end{itemize}

\section{Energy estimates}\label{Section_Global_Existence}  
The main goal of  this section is to use the energy method to derive the main estimates of the solution, which will be used to prove Theorem \ref{Main_Theorem}. In fact, we prove by a continuity argument that the energy $\mathrm{E}_{m}[\mathbf{U}](t)$ is uniformly bounded for all time if $\delta$ is sufficiently small. The main idea in the proof is to bound the nonlinear terms by $\mathrm{E}_{s_0}[\mathbf{U}](t)\mathrm{D}_{m}^2[\mathbf{U}](t) $ and get the estimate \eqref{Estimate_Main}. 
As a result,  if we prove that  $\mathrm{E}_{s_0}[\mathbf{U}](t)\leq \varepsilon$ provided that $\delta$ is sufficiently small, then we can absorb the last term in \eqref{Estimate_Main} by the left-hand side. 
To control the nonlinear terms, we do not use the commutator estimates as in \cite{Racke_Said_2019}, instead and inspired by \cite{Guo_Wang_2012}, we use Sobolev interpolation of the Gagliardo--Nirenberg inequality between higher-order and lower-order spatial derivatives. 

 Let $s_0$ be as in Theorem \ref{Main_Theorem}. We now use a bootstrap  argument to show that $\mathrm{E}_{s_0}[\mathbf{U}](t)$ is uniformly bounded.  
We recall that 
\begin{equation}
\mathrm{E}_{s_0}^2[\mathbf{U}](t)=\sum_{k=0}^{s_0} \mathcal{E}_k^2[\mathbf{U}](t). 
\end{equation}
We derive our estimates under the a priori assumption 
\begin{equation}\label{boot_strap_Assum}
\mathrm{E}_{s_0}^2[\mathbf{U}](t)\leq \varepsilon^2
\end{equation}
and show that 
\begin{equation}
\mathrm{E}_{s_0}^2[\mathbf{U}](t)\leq \frac{1}{2}\varepsilon^2.
\end{equation}
Hence, we deduce that $\mathrm{E}_{s_0}^2[\mathbf{U}](t)\leq \varepsilon^2$ provided that the initial energy $\mathrm{E}_{s_0}^2[\mathbf{U}](0)$  is small enough.  First, we have the following estimate  
\begin{proposition}[First-order energy estimate] \label{Prop:FirstOrderEE}
Let	$\mathrm{E}_{s_0}^2[\mathbf{U}](t)\leq \varepsilon^2$ for some $\varepsilon>0$ and a fixed integer $5/2<s_0<s$. Then
\begin{eqnarray}  \label{Main_Estimate_D_0}
\mathcal{E}_0^2[\mathbf{U}](t)+\mathcal{D}_0^2[\mathbf{U}](t)\lesssim  \mathcal{E}_0^2[\mathbf{U}](0) +\varepsilon
\mathcal{D}^2_0[\mathbf{U}](t).
\end{eqnarray}
\end{proposition}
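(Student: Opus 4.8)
The plan is to run a weighted-energy (Lyapunov) argument for the first-order system \eqref{System_New}, organised around the dissipative unknowns $\phi:=u+\tau v$ and $z:=v+\tau w$. Two structural identities drive everything: $\phi_t=z$ and, after differentiating the third equation of \eqref{System_New} in time,
\[
z_t=\Delta\phi+(\beta-\tau)\Delta v+\mathcal N,\qquad \mathcal N:=\tfrac{B}{A}\,vw+2\,\nabla u\cdot\nabla v ,
\]
together with the algebraic relation $w=(z-v)/\tau$ coming from the definition of $z$. Since $0<\tau<\beta$, the coefficient $\beta-\tau>0$ exhibits $v$ as the damping variable. First I would record the linear multiplier identities: testing the $z$-equation against $z$ and against $\nabla z$ yields the basic energy together with the dissipation $\|\nabla v\|_{L^2}^2$ and $\|\Delta v\|_{L^2}^2$; testing the third equation against $w$ yields $\|w\|_{L^2}^2$; and a Kawashima-type interaction functional built from $\int \nabla z\cdot\nabla\phi\,\textup d x$ and the relaxation $\tau v_t+v=z$ supplies the remaining dissipation $\|\Delta\phi\|_{L^2}^2$ and $\|\nabla z\|_{L^2}^2$. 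Combining these with suitably small coefficients on the sign-indefinite interaction terms produces a Lyapunov functional $\mathcal L_0(t)\approx \mathcal E_0^2[\mathbf U](t)$ (before the supremum) obeying
\[
\frac{\textup d}{\textup d t}\mathcal L_0(t)+c_0\,\mathscr D_0^2[\mathbf U](t)\le \big|\mathcal R(t)\big|,
\]
where $c_0>0$ and $\mathcal R$ collects the nonlinear contributions $\int z\,\mathcal N$, $\int w\,\mathcal N$, $\int \nabla z\cdot\nabla\mathcal N$ and $\int \mathcal N\,\Delta\phi\,\textup d x$. This linear dissipative structure is the one underlying the local theory of \cite{Racke_Said_2019}; the genuinely new work is the treatment of $\mathcal R$.

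Integrating in time, using $\mathcal L_0\approx\mathcal E_0^2$ and the monotonicity of $\mathcal D_0^2$ in $t$, and taking the supremum over $[0,t]$, reduces \eqref{Main_Estimate_D_0} to the single nonlinear bound
\[
\int_0^t|\mathcal R(\sigma)|\,\textup d\sigma\ \lesssim\ \varepsilon\,\mathcal D_0^2[\mathbf U](t).
\]
The step I expect to be the main obstacle is precisely this estimate, and the difficulty is that the $L^2$-norms appearing in $\mathscr D_0$ are $\|\nabla v\|,\|\Delta v\|,\|w\|,\|\Delta\phi\|,\|\nabla z\|$, whereas the undifferentiated norms $\|v\|,\|z\|$ sit only in the energy, not the dissipation. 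Following \cite{Guo_Wang_2012} I would avoid commutator estimates and instead use the Gagliardo--Nirenberg--Sobolev embedding to route each ``missing'' factor through $L^6$, which is controlled by a gradient already present in $\mathscr D_0$ (e.g.\ $\|z\|_{L^6}\lesssim\|\nabla z\|_{L^2}$), while a lowest-order factor is placed in $L^3$ or $L^\infty$ and absorbed by the a priori bound $\mathrm E_{s_0}[\mathbf U](t)\le\varepsilon$ via $H^{s_0}\hookrightarrow W^{1,\infty}$ (valid since $s_0>5/2$). Schematically, $\int z\,vw\lesssim \|z\|_{L^6}\|v\|_{L^3}\|w\|_{L^2}\lesssim\varepsilon\,\|\nabla z\|_{L^2}\|w\|_{L^2}\lesssim\varepsilon\,\mathscr D_0^2$, and likewise $\int z\,(\nabla u\cdot\nabla v)$, $\int w\,\mathcal N$ and $\int\mathcal N\,\Delta\phi$ each factor as $\varepsilon\times(\text{two dissipation norms})$, which integrate to $\varepsilon\,\mathcal D_0^2$. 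The point that makes this work is that $\|v+\tau w\|_{L^2}$ lies in $\mathcal E_0$, so $\|v\|_{L^2}\lesssim \mathrm E_{s_0}[\mathbf U](t)\le\varepsilon$ and the lowest-order $L^3/L^\infty$ factors are genuinely small.

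The one genuinely delicate term is $\int\nabla z\cdot\nabla\mathcal N$, because $\nabla\mathcal N$ produces the factor $v\,\nabla w$ with $\nabla w$ lying outside both $\mathcal E_0$ and $\mathscr D_0$. Here I would use the algebraic identity $w=(z-v)/\tau$, hence $\nabla w=(\nabla z-\nabla v)/\tau$, to rewrite
\[
\int \nabla z\cdot v\,\nabla w\,\textup d x=\tfrac1\tau\int v\,|\nabla z|^2\,\textup d x-\tfrac1\tau\int v\,\nabla z\cdot\nabla v\,\textup d x ,
\]
after which both integrals are bounded by $\|v\|_{L^\infty}\big(\|\nabla z\|_{L^2}^2+\|\nabla z\|_{L^2}\|\nabla v\|_{L^2}\big)\lesssim\varepsilon\,\mathscr D_0^2$; the remaining pieces of $\nabla\mathcal N$ (the $w\,\nabla v$ term and the derivatives of $\nabla u\cdot\nabla v$) are handled as above, pairing an $L^\infty$ factor bounded by $\varepsilon$ with two dissipation norms. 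Collecting all contributions gives $\int_0^t|\mathcal R|\,\textup d\sigma\lesssim\varepsilon\,\mathcal D_0^2[\mathbf U](t)$ and hence \eqref{Main_Estimate_D_0}. The remaining bookkeeping---fixing the small coefficients and verifying the positive definiteness $\mathcal L_0\approx\mathcal E_0^2$, which relies on $0<\tau<\beta$---is routine but must be carried out.
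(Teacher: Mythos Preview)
Your proposal is correct, but it is doing far more work than the paper's proof. The paper simply invokes estimate~(2.39) from \cite{Racke_Said_2019}, which already reads
\[
\mathcal{E}_0^2[\mathbf{U}](t)+\mathcal{D}_0^2[\mathbf{U}](t)\lesssim \mathcal{E}_0^2[\mathbf{U}](0)+\mathcal{E}_0[\mathbf{U}](t)\,\mathcal{D}_0^2[\mathbf{U}](t)+M_0[\mathbf{U}](t)\,\mathcal{D}_0^2[\mathbf{U}](t),
\]
with $M_0$ a supremum of $L^\infty$ norms of $v,\,v+\tau w,\,\nabla(u+\tau v),\,\nabla u,\,\nabla v$; it then observes that $M_0+\mathcal{E}_0\lesssim \mathrm{E}_{s_0}\le\varepsilon$ by the Sobolev embedding $H^{s_0}\hookrightarrow W^{1,\infty}$ (valid since $s_0>5/2$), and the proposition follows in one line. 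What you have written is effectively a self-contained reconstruction of that cited estimate: your Lyapunov functional, your multipliers, and your nonlinear terms $\int z\,\mathcal N$, $\int w\,\mathcal N$, $\int\nabla z\cdot\nabla\mathcal N$, $\int\mathcal N\,\Delta\phi$ are exactly the $k=0$ instances of the paper's $\mathrm{I}_1^{(0)},\mathrm{I}_5^{(0)},\mathrm{I}_2^{(0)},\mathrm{I}_3^{(0)}$. You have omitted one multiplier (the one producing $\mathrm{I}_4^{(0)}=\int\nabla v\cdot\nabla\mathcal N$), but it is treated identically to your $\int\nabla z\cdot\nabla\mathcal N$ term, so this is a bookkeeping gap rather than a mathematical one. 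Your $L^6$--$L^3$--$L^2$ splitting for $\int z\,vw$ is a mild variant of the $L^\infty$--$L^2$--$L^2$ splitting implicit in the $M_0$ formulation (it routes $\|z\|_{L^6}\lesssim\|\nabla z\|_{L^2}$ through the dissipation rather than $\|z\|_{L^2}$ through the energy), but both lead to the same $\varepsilon\,\mathcal D_0^2$ bound once $\mathcal{E}_0\lesssim\varepsilon$ is used. In short: your argument is sound and self-contained, the paper's is a two-line citation of the same underlying estimate.
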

\begin{proof}
According to~\cite[Estimate (2.39)]{Racke_Said_2019}, the following energy estimate holds: 
\begin{equation}  \label{Main_Estimate_D_0_1}
\begin{aligned}
\mathcal{E}_0^2[\mathbf{U}](t)+\mathcal{D}_0^2[\mathbf{U}](t)\lesssim&\, \mathcal{E}_0^2[\mathbf{U}](0) + \mathcal{E}%
_0[\mathbf{U}](t)\mathcal{D}^2_0[\mathbf{U}](t) \\&+M_0[\mathbf{U}](t)\mathcal{D}_0^2[\mathbf{U}](t),
\end{aligned}
\end{equation}
where
\begin{equation}
\begin{aligned}
M_0[\mathbf{U}](t)=& \,\sup_{0\leq s\leq t}\Big(\left\Vert v(s)\right\Vert _{L^{\infty
}}+\left\Vert (v+\tau w)(s)\right\Vert _{L^{\infty }}\Big.\\
&\Big.+\left\Vert \nabla
(u+\tau v)(s)\right\Vert _{L^{\infty }}+\Vert \nabla u(s)\Vert_{L^\infty}+\Vert \nabla v(s)\Vert_{L^\infty}\Big).
\end{aligned}	
\end{equation}
Using the Sobolev embedding theorem (recall that $s_0>5/2$) together with the assumption on $\mathrm{E}_{s_0}^2[\mathbf{U}](t)$ yields \[M_0[\mathbf{U}](t)+\mathcal{E}%
_0[\mathbf{U}](t)\lesssim \mathcal{E}%
_{s_0}[\mathbf{U}](t)\lesssim \varepsilon.\]
Plugging this inequality into \eqref{Main_Estimate_D_0_1} further yields the desired bound.  
\end{proof}

To prove a higher-order version of this energy estimate, we apply the operator $\nabla^k,\, k\geq 1$ to \eqref{System_New}. We obtain 
for $U:=\nabla^k u$, $V:=\nabla^k v$ and $W:=\nabla^k w$
\begin{equation}
\left\{
\begin{array}{ll}
\partial_t U=V,\vspace{0.2cm} &  \\
\partial_tV=W,\vspace{0.2cm} &  \\
\tau \partial_tW=\Delta U+\beta \Delta V-W+\nabla^k\Big(\dfrac{B}{A}vw+2\nabla u \cdot \nabla v\Big).  &
\end{array}%
\right.  \label{System_New_k}
\end{equation}
Let us also define the right-hand side functionals as
\begin{equation}\label{R_1_k}
\mathrm{R}^{(k)}=\nabla^k\Big(\dfrac{B}{A}vw+2\nabla u \cdot \nabla v\Big).  
\end{equation}
The following estimate holds; cf.~\cite[Estimate (2.50)]{Racke_Said_2019}.
\begin{proposition}[Higher-order energy estimate, \cite{Racke_Said_2019}] Under the assumptions of Proposition~\ref{Prop:FirstOrderEE}, for all $1\leq k\leq s$, it holds
\begin{eqnarray}  \label{E_I_Est}
&&\mathcal{E}^2_k[\mathbf{U}](t)+\mathcal{D}_k^2[\mathbf{U}](t)
\lesssim \mathcal{E}^2_k[\mathbf{U}](0)+\sum_{i=1}^5\int_0^t \mathrm{\mathbf{I}}_i^{(k)}(\sigma)d\sigma
\end{eqnarray} 
with 
\begin{equation} \label{I_terms}
\begin{aligned}
\mathrm{\mathbf{I}}_1^{(k)}=&\,\Big|\int_{\mathbb{R}^{3}}\mathrm{R}^{(k)}(t)\left( V+\tau W\right)
\dx\Big|,\qquad \mathrm{\mathbf{I}}_2^{(k)}=\,\Big|\int_{\mathbb{R}^{3}}\nabla \mathrm{R}^{(k)} \nabla (V+\tau W)\dx\Big|,\\
\mathrm{\mathbf{I}}_3^{(k)}=&\,\Big|\int_{\mathbb{R}^{3}}\mathrm{R}^{(k)}\Delta \left( U+\tau
V\right) \dx\Big|,\qquad \mathrm{\mathbf{I}}_4^{(k)}=\, \Big|\int_{\mathbb{R}^{3}}\nabla \mathrm{R}^{(k)}\nabla V\dx\Big|, \\
\mathrm{\mathbf{I}}_5^{(k)}=&\,\Big|\int_{\mathbb{R}^{3}}\mathrm{R}^{(k)}W\dx\Big|.
\end{aligned}
\end{equation}
\end{proposition}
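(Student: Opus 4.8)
The plan is to run the energy--multiplier method directly on the differentiated system \eqref{System_New_k}. The crucial observation is that the linear part of \eqref{System_New_k} has exactly the same structure as the undifferentiated system \eqref{System_New}: writing $(U,V,W)=(\nabla^k u,\nabla^k v,\nabla^k w)$, the combinations $V+\tau W=\nabla^k(v+\tau w)$ and $U+\tau V=\nabla^k(u+\tau v)$ obey the same relations $\partial_t(U+\tau V)=V+\tau W$ and $\tau\partial_t W+W=\Delta U+\beta\Delta V+\mathrm{R}^{(k)}$ that underlie the first-order estimate \eqref{Main_Estimate_D_0_1}. Consequently $\mathcal{E}_k^2$ and $\mathcal{D}_k^2$ are precisely the $\nabla^k$-analogues of $\mathcal{E}_0^2$ and $\mathcal{D}_0^2$, and I would reproduce verbatim the Lyapunov computation of \cite{Racke_Said_2019}, the only change being that the nonlinearity is now carried by the single source term $\mathrm{R}^{(k)}$.

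Concretely, I would build a Lyapunov functional $\mathcal{L}_k(t)$ as a linear combination, with small but fixed positive weights, of the five energy identities obtained by testing the evolution law $\tau\partial_t W=\Delta U+\beta\Delta V-W+\mathrm{R}^{(k)}$ against the multipliers $V+\tau W$, $-\Delta(V+\tau W)$, $\Delta(U+\tau V)$, $-\Delta V$, and $W$. After integrating by parts in space, each multiplier reproduces on $t$-differentiation one block of $\mathcal{E}_k^2$ and contributes one block of the dissipation $\mathscr{D}_k^2$, while the pairing of $\mathrm{R}^{(k)}$ against that same multiplier is exactly one of $\mathbf{I}_1^{(k)},\dots,\mathbf{I}_5^{(k)}$ (the two gradient multipliers yield $\mathbf{I}_2^{(k)}$ and $\mathbf{I}_4^{(k)}$ once one derivative is moved onto $\mathrm{R}^{(k)}$). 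I would then verify that $\mathcal{L}_k(t)\approx\mathcal{E}_k^2[\mathbf{U}](t)$ uniformly in $t$, so that controlling $\mathcal{L}_k$ controls the energy.

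The hard part is the \emph{coercivity} of the dissipation: the indefinite linear cross-terms generated by the five multipliers must combine into a quantity bounded below by a positive multiple of $\mathscr{D}_k^2$. This is precisely where the dissipativity condition $0<\tau<\beta$ enters, exactly as in the $k=0$ case; the damping is degenerate (after eliminating $W$ it appears only through the combination $(\beta-\tau)\Delta V$), so the weights in $\mathcal{L}_k$ must be tuned so that the bad cross-terms are absorbed by the good ones via Young's inequality. Granting this coercivity---which is inherited from \cite{Racke_Said_2019} since it concerns only the linear part---the differential inequality $\frac{\mathrm{d}}{\mathrm{d}t}\mathcal{L}_k+c\,\mathscr{D}_k^2\lesssim\sum_{i=1}^5\mathbf{I}_i^{(k)}$ follows, and integrating in time from $0$ to $t$ together with $\mathcal{L}_k\approx\mathcal{E}_k^2$ yields \eqref{E_I_Est}. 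I emphasize that no estimate of $\mathrm{R}^{(k)}$ itself is required at this stage: the nonlinear contributions are deliberately left intact inside the $\mathbf{I}_i^{(k)}$, to be handled afterwards by the Gagliardo--Nirenberg interpolation advertised in the introduction.
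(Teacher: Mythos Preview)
Your proposal is correct and is exactly the energy--multiplier argument behind \cite[Estimate (2.50)]{Racke_Said_2019}; note that the present paper does not give its own proof of this proposition but simply cites that reference, so there is nothing further to compare.
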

Thus our proof reduces to estimating the right-hand side terms $\mathrm{\mathbf{I}}_1^{(k)},\dots, \mathrm{\mathbf{I}}_5^{(k)}$. This will be done through several lemmas (see Lemmas \ref{Lemma_I_1}--\ref{Lemma_I_5} below).  Inspired by \cite{Guo_Wang_2012}, we use a different method to handle the nonlinearities compared to~\cite{Racke_Said_2019}. In particular, we will make extensive use of the Gagliardo--Nirenberg inequality \eqref{Interpolation_inequality} and the Sobolev--Gagliardo--Nirenberg inequality  \eqref{Sobolev_Gagl_Ni_Interpolation_ineq_Main}, which will allow us to interpolate between higher-order and lower-order Sobolev norms and ``close" the nonlinear estimates. 

\indent We thus wish to show an estimate of the form 
\begin{equation}  \label{Estimate_Main}
\mathrm{E}^2_s[\mathbf{U}](t)+\mathrm{D}^2_s[\mathbf{U}](t)\lesssim  \mathrm{E}%
^2_s[\mathbf{U}](0)+\mathrm{E}_{s_0}[\mathbf{U}](t)\mathrm{D}^2_s[\mathbf{U}](t),
\end{equation}  
which improves the one stated  in \cite{Racke_Said_2019}, where $\mathrm{E}_{s}(t)$ replaces $\mathrm{E}_{s_0}(t)$ in \eqref{Estimate_Main}. 

\subsection{Estimates of the terms $\mathrm{\mathbf{I}}_i^{(k)},\, 1\leq i\leq 5$}

The goal of this section is to provide the appropriate estimates of the last term on the right-hand side of the estimate \eqref{E_I_Est}. 

\begin{lemma}[Estimate of $\mathrm{\mathbf{I}}_1^{(k)}$]\label{Lemma_I_1} For any $1\leq k\leq s$, it holds that 
\begin{equation}\label{I_1_Estimate}
\begin{aligned}
\mathrm{\mathbf{I}}_1^{(k)}\lesssim \varepsilon 
&\Big(\Vert \nabla^k v\Vert
_{L^{2}}^{2}+\Vert \nabla^{k+1} v\Vert
_{L^{2}}^{2}+\Vert
\nabla^{k+2}u\Vert _{L^{2}}^{2}+\Vert
\nabla^{k}w\Vert _{L^{2}}^{2} +\Vert \nabla^{k+1}(v+\tau w)\Vert_{L^2}^2\Big)\\
\lesssim& \,\varepsilon\big( \mathscr{D}^2_{k-1}[\mathbf{U}](t)+\mathscr{D}^2_k[\mathbf{U}](t)\big).
\end{aligned}
\end{equation}
\end{lemma}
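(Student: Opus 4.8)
The plan is to reduce $\mathrm{\mathbf{I}}_1^{(k)}$ to a product of a high-order \emph{dissipative} norm and the $L^2$-norm of a lower-order copy of the nonlinearity, and then to control the latter by interpolation. Since $V+\tau W=\nabla^k(v+\tau w)$ and $k\ge 1$, I would first integrate by parts once to move a derivative off the nonlinear factor $\mathrm{R}^{(k)}=\nabla^k\big(\tfrac{B}{A}vw+2\nabla u\cdot\nabla v\big)$ and onto $V+\tau W$, raising its order to $k+1$. Together with the Cauchy--Schwarz inequality this gives
\[
\mathrm{\mathbf{I}}_1^{(k)}\lesssim\Big(\big\Vert\nabla^{k-1}(vw)\big\Vert_{L^2}+\big\Vert\nabla^{k-1}(\nabla u\cdot\nabla v)\big\Vert_{L^2}\Big)\,\big\Vert\nabla^{k+1}(v+\tau w)\big\Vert_{L^2}.
\]
The factor $\Vert\nabla^{k+1}(v+\tau w)\Vert_{L^2}$ is exactly one of the terms of the dissipation $\mathscr D_k$, so after Young's inequality it remains to show that each of the two nonlinear $L^2$-norms is bounded by $\varepsilon$ times (a sum of) the dissipative norms $\Vert\nabla^k v\Vert_{L^2}$, $\Vert\nabla^{k+1}v\Vert_{L^2}$, $\Vert\nabla^{k+2}u\Vert_{L^2}$, $\Vert\nabla^k w\Vert_{L^2}$.

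For that estimate I would expand each nonlinearity by the Leibniz rule, writing $\nabla^{k-1}(vw)=\sum_{j=0}^{k-1}\binom{k-1}{j}\nabla^j v\,\nabla^{k-1-j}w$ and similarly $\nabla^{k-1}(\nabla u\cdot\nabla v)=\sum_{j=0}^{k-1}\binom{k-1}{j}\nabla^{j+1}u\,\nabla^{k-j}v$, and estimate each summand by Hölder's inequality with a conjugate pair of exponents followed by the Gagliardo--Nirenberg inequality \eqref{Interpolation_inequality}. Following \cite{Guo_Wang_2012}, the key is to interpolate each of the two factors between a \emph{low}-order $L^2$-derivative and a \emph{top}-order dissipative $L^2$-derivative (one of $\nabla^{k+1}v$, $\Delta\nabla^k v$, $\nabla^{k+2}u$, or $\nabla^k w$), choosing the interpolation so that the high-order endpoints contribute a single dissipative factor while the complementary low-order factors combine into $\mathrm{E}_{s_0}[\mathbf{U}](t)$, the total number of derivatives being preserved. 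The low-order factors are then absorbed into $\mathrm{E}_{s_0}[\mathbf{U}](t)\lesssim\varepsilon$ via the a~priori bound \eqref{boot_strap_Assum} and Sobolev embedding, while the high-order factors reproduce precisely the dissipative norms listed in \eqref{I_1_Estimate}. A final application of Young's inequality yields the first inequality in \eqref{I_1_Estimate}.

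The second inequality in \eqref{I_1_Estimate} is then immediate from the definition of the dissipation functionals in \eqref{Dissipative_weighted_norm_1}: the terms $\Vert\nabla^{k+1}v\Vert_{L^2}^2$, $\Vert\nabla^k w\Vert_{L^2}^2$ and $\Vert\nabla^{k+1}(v+\tau w)\Vert_{L^2}^2$ are summands of $\mathscr D_k^2$; the term $\Vert\nabla^{k+2}u\Vert_{L^2}^2$ is controlled by $\Vert\Delta\nabla^k(u+\tau v)\Vert_{L^2}^2+\tau^2\Vert\Delta\nabla^k v\Vert_{L^2}^2\lesssim\mathscr D_k^2$; and $\Vert\nabla^k v\Vert_{L^2}^2=\Vert\nabla^{(k-1)+1}v\Vert_{L^2}^2$ is a summand of $\mathscr D_{k-1}^2$.

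The main obstacle is the bookkeeping in the interpolation step: for every $j$ in the two Leibniz sums one must exhibit admissible Hölder exponents and Gagliardo--Nirenberg parameters for which (i) the homogeneity matches, (ii) the high-order endpoint is one of the allowed dissipative norms, and (iii) the low-order endpoint has order at most $s_0$. Condition (iii) is precisely where the hypothesis $s_0=\max\{[2s/3]+1,[s/2]+2\}$ enters: it guarantees that the ``middle'' derivative orders arising when the $k-1$ derivatives are split between the two factors never exceed $s_0$, so that every low-order factor is genuinely small. Verifying that this single choice of $s_0$ works uniformly for all $1\le k\le s$ is the technical heart of the argument.
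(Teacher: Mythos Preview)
Your proposal is correct and follows essentially the same route as the paper: integrate by parts once to produce $\nabla^{k-1}$ on the nonlinearity against $\nabla^{k+1}(v+\tau w)$, expand by Leibniz, apply H\"older with the split $L^3\times L^6\times L^2$, and then use the Gagliardo--Nirenberg interpolation \eqref{Interpolation_inequality}--\eqref{Sobolev_Gagl_Ni_Interpolation_ineq_Main} to write each factor as a product of a low-order norm (absorbed into $\mathrm E_{s_0}\le\varepsilon$) and a top-order dissipative norm, finishing with Young's inequality. The paper carries out the explicit bookkeeping you flag as the technical heart, obtaining low-order endpoint exponents $m_0=\tfrac{k}{2(1+\ell)}\le\tfrac{s-1}{2}$ and $m_1=\tfrac{k+1}{2(2+\ell)}\le\tfrac{s}{4}$; thus for \emph{this} lemma only $s_0\ge[(s-1)/2]+1$ is actually needed, the sharper constraint $s_0\ge[2s/3]+1$ arising only later in the estimate of $\mathrm{\mathbf I}_2^{(k)}$.
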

\begin{proof}
Recall the definition of $R^{(k)}$ \eqref{R_1_k}. We have  \begin{equation}
\begin{aligned}
\mathrm{\mathbf{I}}_1^{(k)}=&\,\int_{\mathbb{R}^{3}}\left|\nabla^{k-1}\left(\dfrac{B}{A}vw+2\nabla u \cdot \nabla v\right)\nabla^{k+1}(v+\tau w)\right|
\dx\\
=&\,\dfrac{B}{A}\int_{\mathbb{R}^{3}} \Big|\sum_{0\leq \ell\leq k-1}C_{k-1}^\ell\nabla^{k-1-\ell} v\nabla^\ell w\nabla^{k+1}(v+\tau w)\Big|
\dx\\
&+2\int_{\R^3} \Big|\sum_{0\leq \ell\leq k-1}C_{k-1}^\ell\nabla^{k-1-\ell} \nabla u\nabla^\ell\nabla  v\nabla^{k+1}(v+\tau w)\Big|\dx
=:\,\mathrm{\mathbf{I}}_{1;1}^{(k)}+\mathrm{\mathbf{I}}_{1;2}^{(k)}.
\end{aligned}
\end{equation}
The term $\mathrm{\mathbf{I}}_{1;1}^{(k)}$ can be estimates as follows: 

 \begin{equation}\label{I_1_1_Estimate_1}
 \begin{aligned}
\mathrm{\mathbf{I}}_{1;1}^{(k)}\lesssim \sum_{0\leq \ell\leq k-1}\Vert \nabla^{k-1-\ell}v\Vert_{L^3}\Vert \nabla^\ell w\Vert_{L^6} \Vert \nabla^{k+1}(v+\tau w)\Vert_{L^2}. 
\end{aligned}
\end{equation}  
Employing the Gagliardo--Nirenberg inequality \eqref{Interpolation_inequality} yields
 \begin{equation}\label{Interpo_I_1_1}
\Vert \nabla^\ell w\Vert_{L^6}\lesssim \Vert
\nabla^{k}w\Vert _{L^{2}}^{\frac{1+\ell}{k}}\left\Vert w\right\Vert
_{L^{2}}^{1-\frac{1+\ell}{k}},\qquad 0\leq \ell\leq k-1.
\end{equation}
Now, by applying the Sobolev--Gagliardo--Nirenberg inequality \eqref{Sobolev_Gagl_Ni_Interpolation_ineq_Main}, we obtain 
 \begin{equation}\label{Interpo_I_1_2}
\Vert\nabla^{k-1-\ell} v\Vert_{L^3}\lesssim \left\Vert
\nabla^{m_0}v\right\Vert _{L^{2}}^{\frac{1+\ell}{k}}\Vert \nabla^k v\Vert
_{L^{2}}^{1-\frac{1+\ell}{k}},\qquad 0\leq \ell\leq k-1
\end{equation}
with 
\begin{equation}
\frac{1}{3}=\frac{k-1-\ell}{3}+\left( \frac{1}{2}-\frac{m_0}{3}\right)\frac{1+\ell}{k} + \left( \frac{1}{2}-\frac{k}{3}\right)\Big(1-\frac{1+\ell}{k}\Big).
\end{equation}
This relation implies 
\begin{equation}
m_0=\frac{k}{2(1+\ell)}\leq \frac{k}{2}\leq \frac{s-1}{2}. 
\end{equation}
It is clear that for $s_0\geq [(s-1)/2]+1$ we have \[\left\Vert
\nabla^{m_0}v(t)\right\Vert _{L^{2}}\lesssim \mathcal{E}_{s_0}(t).\] Hence, by plugging estimates \eqref{Interpo_I_1_1} and \eqref{Interpo_I_1_2} into \eqref{I_1_1_Estimate_1}, and making use of assumption \eqref{boot_strap_Assum} on $\mathcal{E}_{s_0}(t)$, we obtain
\begin{equation}\label{I_1_1_Estimate_2}
 \begin{aligned}
\mathrm{\mathbf{I}}_{1;1}^{(k)}\lesssim&\, \sum_{0\leq \ell\leq k-1}\left\Vert
\nabla^{m_0}v\right\Vert _{L^{2}}^{\frac{1+\ell}{k}}\Vert \nabla^k v\Vert
_{L^{2}}^{1-\frac{1+\ell}{k}}\Vert
\nabla^{k}w\Vert _{L^{2}}^{\frac{1+\ell}{k}}\left\Vert w\right\Vert
_{L^{2}}^{1-\frac{1+\ell}{k}} \Vert \nabla^{k+1}(v+\tau w)\Vert_{L^2}\\
\lesssim&\, \varepsilon \sum_{0\leq \ell\leq k-1}\Vert \nabla^k v\Vert
_{L^{2}}^{1-\frac{1+\ell}{k}}\Vert
\nabla^{k}w\Vert _{L^{2}}^{\frac{1+\ell}{k}} \Vert \nabla^{k+1}(v+\tau w)\Vert_{L^2}. 
\end{aligned}
\end{equation}
Young's inequality implies that 
\begin{equation}\label{I_1_1_Estimate}
\mathrm{\mathbf{I}}_{1;1}^{(k)}\lesssim\varepsilon \Big(\Vert \nabla^k v\Vert
_{L^{2}}^{2}+\Vert
\nabla^{k}w\Vert _{L^{2}}^{2} +\Vert \nabla^{k+1}(v+\tau w)\Vert_{L^2}^2\Big).
\end{equation}

 Now, we estimate $\mathrm{\mathbf{I}}_{1;2}^{(k)}$. We have
 \begin{equation}\label{I_1_2_Estimate_1}
 \begin{aligned}
\mathrm{\mathbf{I}}_{1;2}^{(k)}\lesssim \sum_{0\leq \ell\leq k-1}\Vert \nabla^{k-\ell}u\Vert_{L^3}\Vert \nabla^{\ell+1} v\Vert_{L^6} \Vert \nabla^{k+1}(v+\tau w)\Vert_{L^2}. 
\end{aligned}
\end{equation}
By employing again the Gagliardo--Nirenberg inequality, we infer
 \begin{equation}
\Vert \nabla^{\ell+1} v\Vert_{L^6}\lesssim \Vert  v\Vert_{L^2}^{1-\frac{\ell+2}{k+1}}\Vert  \nabla^{k+1}v\Vert_{L^2}^{\frac{\ell+2}{k+1}}. 
\end{equation}
We then also have by using the Sobolev--Gagliardo--Nirenberg inequality \eqref{Sobolev_Gagl_Ni_Interpolation_ineq_Main}, 
 
  \begin{equation}
\begin{aligned}
\Vert \nabla^{k-\ell}u\Vert_{L^3}\lesssim \left\Vert
\nabla^{m_1+1}u\right\Vert _{L^{2}}^{\frac{\ell+2}{1+k}}\Vert \nabla^{k+2} u\Vert
_{L^{2}}^{1-\frac{\ell+2}{1+k}},\qquad 0\leq \ell\leq k-1
\end{aligned}
\end{equation}
with 
\begin{equation}
\frac{1}{3}=\frac{k-\ell}{3} + \left( \frac{1}{2}-\frac{k+2}{3}\right)\Big(1-\frac{\ell+2}{1+k}\Big)+\left( \frac{1}{2}-\frac{m_1+1}{3}\right)\frac{\ell+2}{1+k}.
\end{equation}
This yields 
\begin{equation}
m_1=\frac{k+1}{2(2+\ell)}\leq \frac{k+1}{4}\leq \frac{s}{4}. 
\end{equation}
Thus for $s_0\geq [\frac{s}{4}]+1$,  it holds that \[\left\Vert
\nabla^{m_1+1}u(t)\right\Vert _{L^{2}}\lesssim \mathcal{E}_{s_0}[\mathbf{U}](t).\] Consequently, by making use of the assumption \eqref{boot_strap_Assum} and the fact  that  $\Vert v\Vert_{L^2}\lesssim \mathcal{E}_{s_0}[\mathbf{U}](t)$, we have
\begin{equation}\label{I_1_2_Estimate_2}
 \begin{aligned}
\mathrm{\mathbf{I}}_{1;2}^{(k)}\lesssim &\,\sum_{0\leq \ell\leq k-1}\Vert \nabla^{k-\ell}u\Vert_{L^3}\Vert \nabla^{\ell+1} v\Vert_{L^6} \Vert \nabla^{k+1}(v+\tau w)\Vert_{L^2}\Vert \nabla^{k+1}(v+\tau w)\Vert_{L^2}\\
 \lesssim &\,\sum_{0\leq \ell\leq k-1}\Vert  v\Vert_{L^2}^{1-\frac{\ell+2}{k+1}}\Vert  \nabla^{k+1}v\Vert_{L^2}^{\frac{\ell+2}{k+1}}\left\Vert
\nabla^{m_1+1}u\right\Vert _{L^{2}}^{\frac{2+\ell}{1+k}}\Vert \nabla^{k+2} u\Vert
_{L^{2}}^{1-\frac{\ell+2}{1+k}}\Vert \nabla^{k+1}(v+\tau w)\Vert_{L^2}\\
 \lesssim &\,\varepsilon \sum_{0\leq \ell\leq k-1}\Vert  \nabla^{k+1}v\Vert_{L^2}^{\frac{\ell+2}{k+1}}\Vert \nabla^{k+2} u\Vert
_{L^{2}}^{1-\frac{\ell+2}{1+k}}\Vert \nabla^{k+1}(v+\tau w)\Vert_{L^2}.
\end{aligned}
\end{equation}
Applying Young's inequality yields
\begin{equation}\label{I_1_2_Estimate}
\mathrm{\mathbf{I}}_{1;2}^{(k)}\lesssim\varepsilon\left(\Vert \nabla^{k+1} v\Vert
_{L^{2}}^{2}+\Vert
\nabla^{k+2}u\Vert _{L^{2}}^{2} +\Vert \nabla^{k+1}(v+\tau w)\Vert_{L^2}^2\right). 
\end{equation}
Hence, \eqref{I_1_Estimate} holds on account of \eqref{I_1_1_Estimate} and \eqref{I_1_2_Estimate}.  
\end{proof}
\noindent Next we wish to we estimate $\mathrm{\mathbf{I}}_2^{(k)}$, defined in \eqref{I_terms}. 
\begin{lemma}[Estimate of $\mathrm{\mathbf{I}}_2^{(k)}$]\label{I_2_Lemma} For any $1\leq k\leq s$, it holds that 
\begin{equation}\label{I_2_Estimate}
\begin{aligned}
\mathrm{\mathbf{I}}_2^{(k)}\lesssim &\,
\varepsilon\Big(\Vert \nabla^{k+2}u\Vert_{L^2}^2+\Vert \nabla^{k+1} w\Vert_{L^2}^2+ \Vert \nabla^{k+1}(v+\tau w)\Vert_{L^2}^2+\Vert
\nabla^{k+2}v\Vert _{L^{2}}\Big)\\
\lesssim&\,\varepsilon \mathscr{D}^2_k[\mathbf{U}](t).
\end{aligned}
\end{equation}

\end{lemma}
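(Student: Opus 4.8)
The plan is to mirror the proof of Lemma~\ref{Lemma_I_1}. Unwinding the definition in \eqref{I_terms}, $\mathrm{\mathbf{I}}_2^{(k)}=\big|\int_{\mathbb{R}^3}\nabla^{k+1}\big(\tfrac{B}{A}vw+2\nabla u\cdot\nabla v\big)\cdot\nabla^{k+1}(v+\tau w)\dx\big|$, so that, in contrast to $\mathrm{\mathbf{I}}_1^{(k)}$, no integration by parts is required: the nonlinearity already carries $k+1$ derivatives, matching exactly the factor $\nabla^{k+1}(v+\tau w)$. First I would split $\mathrm{\mathbf{I}}_2^{(k)}=\mathrm{\mathbf{I}}_{2;1}^{(k)}+\mathrm{\mathbf{I}}_{2;2}^{(k)}$ according to the contributions $vw$ and $\nabla u\cdot\nabla v$, and Leibniz-expand each as $\nabla^{k+1}(vw)=\sum_\ell C_{k+1}^\ell\nabla^{k+1-\ell}v\,\nabla^\ell w$ and $\nabla^{k+1}(\nabla u\cdot\nabla v)=\sum_\ell C_{k+1}^\ell\nabla^{k+2-\ell}u\cdot\nabla^{\ell+1}v$. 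Each summand is estimated by Hölder's inequality (keeping $\nabla^{k+1}(v+\tau w)$ in $L^2$), followed by \eqref{Interpolation_inequality} and \eqref{Sobolev_Gagl_Ni_Interpolation_ineq_Main} to interpolate the top-order factors into the dissipation norms $\Vert\nabla^{k+2}v\Vert_{L^2}$, $\Vert\nabla^{k+2}u\Vert_{L^2}$, $\Vert\nabla^{k+1}w\Vert_{L^2}$, and the remaining factor into a lower-order norm controlled by $\mathcal{E}_{s_0}[\mathbf{U}](t)\lesssim\varepsilon$. The two high-order exponents are arranged to sum to one, so a final application of Young's inequality produces the squared dissipation norms with prefactor $\varepsilon$.

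For $\mathrm{\mathbf{I}}_{2;1}^{(k)}$ I would place $\nabla^{k+1-\ell}v$ in $L^3$ and $\nabla^\ell w$ in $L^6$: \eqref{Interpolation_inequality} gives $\Vert\nabla^\ell w\Vert_{L^6}\lesssim\Vert\nabla^{k+1}w\Vert_{L^2}^{(\ell+1)/(k+1)}\Vert w\Vert_{L^2}^{1-(\ell+1)/(k+1)}$, while \eqref{Sobolev_Gagl_Ni_Interpolation_ineq_Main} gives $\Vert\nabla^{k+1-\ell}v\Vert_{L^3}\lesssim\Vert\nabla^{m_0}v\Vert_{L^2}^{(\ell+1)/(k+1)}\Vert\nabla^{k+2}v\Vert_{L^2}^{1-(\ell+1)/(k+1)}$ with $m_0=\tfrac{k+3+2\ell}{2(\ell+1)}\le\tfrac{k+3}{2}\le\tfrac{s+3}{2}$. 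Since the hypothesis $s_0\ge[s/2]+2$ ensures $m_0\le s_0$, we get $\Vert\nabla^{m_0}v\Vert_{L^2}\lesssim\mathcal{E}_{s_0}[\mathbf{U}](t)\lesssim\varepsilon$ and $\Vert w\Vert_{L^2}\lesssim\varepsilon$, and Young's inequality yields $\mathrm{\mathbf{I}}_{2;1}^{(k)}\lesssim\varepsilon\big(\Vert\nabla^{k+2}v\Vert_{L^2}^2+\Vert\nabla^{k+1}w\Vert_{L^2}^2+\Vert\nabla^{k+1}(v+\tau w)\Vert_{L^2}^2\big)$. The endpoint $\ell=k+1$, namely $v\,\nabla^{k+1}w$, is excluded from the interpolation (placing $\nabla^{k+1}w$ in $L^6$ would require the unavailable norm $\Vert\nabla^{k+2}w\Vert_{L^2}$) and is handled directly by $\Vert v\Vert_{L^\infty}\lesssim\varepsilon$, together with $\Vert\nabla^{k+1}w\Vert_{L^2}\lesssim\Vert\nabla^{k+1}(v+\tau w)\Vert_{L^2}+\Vert\nabla^{k+1}v\Vert_{L^2}\lesssim\mathscr{D}_k$.

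The term $\mathrm{\mathbf{I}}_{2;2}^{(k)}$ is the delicate one, and I expect it to be the main obstacle. Here $u$ enters the energy only through $\nabla(u+\tau v)$, so its interpolation cannot use the undifferentiated $L^2$ norm as a base point; hence the naive choice used for $w$ and $v$ (base index $0$) is unavailable for $u$, and placing all the low-order regularity on the $v$-factor would push the $u$-base index above $s_0$. One must instead distribute the low-order regularity between $\nabla^{k+2-\ell}u$ and $\nabla^{\ell+1}v$. For the interior indices $2\le\ell\le k-1$ this can be arranged with a common, bounded base index, while the boundary summands are peeled off: $\ell\in\{0,k+1\}$ via $\Vert\nabla v\Vert_{L^\infty}$ resp.\ $\Vert\nabla u\Vert_{L^\infty}$, and $\ell=k$ via $\Vert\nabla^2u\Vert_{L^\infty}\lesssim\varepsilon$ paired with the dissipation norm $\Vert\nabla^{k+1}v\Vert_{L^2}$. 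The genuinely binding summand is $\ell=1$, i.e.\ $\nabla^{k+1}u\cdot\nabla^2 v$: it cannot be closed by an $L^\infty$ bound on $\nabla^2 v$ because $\Vert\nabla^{k+1}u\Vert_{L^2}$ is an energy, not a dissipation, quantity, so it must be treated by interpolation, and optimizing over the admissible $v$-base point (capped at $2$ by the Gagliardo--Nirenberg ordering) forces the $u$-base index down to exactly $\tfrac{k}{2}+2\le\tfrac{s}{2}+2$. This is precisely what pins the requirement $s_0\ge[s/2]+2$ coming from this lemma. After Young's inequality one obtains $\mathrm{\mathbf{I}}_{2;2}^{(k)}\lesssim\varepsilon\big(\Vert\nabla^{k+2}u\Vert_{L^2}^2+\Vert\nabla^{k+2}v\Vert_{L^2}^2+\Vert\nabla^{k+1}(v+\tau w)\Vert_{L^2}^2\big)$.

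Adding the two estimates gives the first inequality in \eqref{I_2_Estimate}. The passage to $\mathscr{D}_k^2$ is then immediate from the definitions of $\mathscr{D}_k^2$ and $\mathcal{E}_k$: one has $\Vert\nabla^{k+2}u\Vert_{L^2}\lesssim\Vert\Delta\nabla^k(u+\tau v)\Vert_{L^2}+\Vert\Delta\nabla^k v\Vert_{L^2}$ and $\Vert\nabla^{k+1}w\Vert_{L^2}\lesssim\Vert\nabla^{k+1}(v+\tau w)\Vert_{L^2}+\Vert\nabla^{k+1}v\Vert_{L^2}$, whereas $\Vert\nabla^{k+2}v\Vert_{L^2}=\Vert\Delta\nabla^k v\Vert_{L^2}$ and $\Vert\nabla^{k+1}(v+\tau w)\Vert_{L^2}$ are themselves summands of $\mathscr{D}_k^2$; all the right-hand terms are therefore bounded by $\mathscr{D}_k^2[\mathbf{U}](t)$. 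The crux, as indicated, is the index bookkeeping in $\mathrm{\mathbf{I}}_{2;2}^{(k)}$: because $u$ carries two more derivatives than $w$ yet is controlled only from first order up, the Hölder split and the interpolation base points must be chosen so that every intermediate Sobolev index stays $\le s_0$, with the asymmetric boundary summand $\ell=1$ being the one that cannot be absorbed by an $L^\infty$ bound and hence dictates the constraint.
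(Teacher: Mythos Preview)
Your treatment of $\mathrm{\mathbf{I}}_{2;1}^{(k)}$ is essentially the paper's: peel off $\ell=k+1$ by $\Vert v\Vert_{L^\infty}$, and for $0\le\ell\le k$ use the $L^3\times L^6\times L^2$ H\"older split together with \eqref{Interpolation_inequality} and \eqref{Sobolev_Gagl_Ni_Interpolation_ineq_Main} exactly as you describe (your $m_0$ is the paper's $m_2+1$).

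For $\mathrm{\mathbf{I}}_{2;2}^{(k)}$ you take a genuinely different route. The paper peels off only the two endpoints $\ell=0$ and $\ell=k+1$ via $\Vert\nabla v\Vert_{L^\infty}$ and $\Vert\nabla u\Vert_{L^\infty}$, and for all remaining $1\le\ell\le k$ uses a \emph{single} interpolation scheme in which the low endpoint for $v$ is $\Vert v\Vert_{L^\infty}$ (not an $L^2$ norm):
\[
\Vert\nabla^{\ell+1}v\Vert_{L^6}\lesssim\Vert v\Vert_{L^\infty}^{\,1-\frac{2\ell+1}{2k+1}}\Vert\nabla^{k+2}v\Vert_{L^2}^{\,\frac{2\ell+1}{2k+1}},
\qquad
\Vert\nabla^{k+2-\ell}u\Vert_{L^3}\lesssim\Vert\nabla^{m_3+1}u\Vert_{L^2}^{\,\frac{2\ell+1}{2k+1}}\Vert\nabla^{k+2}u\Vert_{L^2}^{\,1-\frac{2\ell+1}{2k+1}},
\]
with $m_3=\tfrac12+\tfrac{2k+1}{2\ell+1}$. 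The worst case is $\ell=1$, giving $m_3\le \tfrac{2s}{3}+\tfrac56$; this is precisely where the constraint $s_0\ge[2s/3]+1$ in Theorem~\ref{Main_Theorem} originates. By contrast, you keep the $v$-base in $L^2$ (effectively $m_v=2$ in your $\ell=1$ computation), which forces more cases (you peel off $\ell=k$ as well and single out $\ell=1$). Your scheme can be made to work---indeed, taking $m_v=2$ uniformly for $1\le\ell\le k$ gives $m_u=\tfrac{k}{2\ell}+2$, maximized at $\ell=1$, so your claimed $\tfrac{k}{2}+2$ is correct---and it even yields a slightly smaller $u$-base index than the paper for large $s$. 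But the paper's $L^\infty$ base point for $v$ buys a cleaner, uniform argument without the extra endpoint peeling or the separate bookkeeping at $\ell=1$; the price is that the resulting index constraint is governed by $2s/3$ rather than $s/2$, and it is this lemma (not any other) that produces the $[2s/3]+1$ appearing in the statement of Theorem~\ref{Main_Theorem}.
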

\begin{proof}
Recall that 
\begin{equation}\label{nabla_R_1_k}
\nabla \mathrm{R}^{(k)}=\nabla ^{k+1}\left( \dfrac{B}{A}vw+2\nabla u\nabla
v\right).
\end{equation}%
Thus, we have
\begin{equation}
\begin{aligned}
\mathrm{\mathbf{I}}_2^{(k)}=&\,\int_{\mathbb{R}^{3}}\Big|\nabla ^{k+1}\left( \frac{B}{A}vw+2\nabla u \cdot \nabla
v\right) \nabla^{k+1} (v+\tau w)\Big|\dx\\
=&\, \mathrm{\mathbf{I}}_{2;1}^{(k)}+\mathrm{\mathbf{I}}_{2;2}^{(k)}. 
\end{aligned}
\end{equation}
We estimate $\mathrm{\mathbf{I}}_{2;1}^{(k)}$ as follows: 
\begin{equation}\label{I_2_1_Main}
\begin{aligned}
\mathrm{\mathbf{I}}_{2;1}^{(k)}\lesssim&\,\int_{\mathbb{R}^{3}}\Big|\nabla^{k+1} (vw)\nabla^{k+1} (v+\tau w)\Big|\dx\\
=&\,\int_{\mathbb{R}^{3}}\Big|\sum_{0\leq \ell\leq k+1}\nabla^{k+1-\ell} v\nabla^\ell w\nabla^{k+1}(v+\tau w)\Big|\dx\\
\lesssim&\,\int_{\mathbb{R}^{3}}\Big|\sum_{0\leq \ell\leq k+1}\nabla^{k+1-\ell} v\nabla^\ell w\nabla^{k+1}(v+\tau w)\Big|\dx\\
=&\, C \int_{\mathbb{R}^{3}}\Big|\sum_{0\leq \ell\leq k} \nabla^{k+1-\ell} v\nabla^\ell w\nabla^{k+1}(v+\tau w)  
\Big|\dx\\
&+C\int_{\R^3}\Big|v\nabla^{k+1} w\nabla^{k+1}(v+\tau w)\Big|\dx\\
\lesssim&\,\sum_{0\leq \ell\leq k+1}\Vert \nabla^{k+1-\ell}v\Vert_{L^3}\Vert \nabla^\ell w\Vert_{L^6} \Vert \nabla^{k+1}(v+\tau w)\Vert_{L^2}. 
\end{aligned}
\end{equation}
Using H\"older's inequality, the term on the right-hand side of \eqref{I_2_1_Main} corresponding to $\ell=k+1$ can be estimated in the following manner:
\begin{equation}\label{I_2_1_First_Estimate}
\begin{aligned}
\int_{\R^3}\Big|\nabla^{k+1-\ell} v\nabla^\ell w\nabla^{k+1}(v+\tau w)\Big|\dx\lesssim&\, \Vert v\Vert_{L^\infty}\Vert \nabla^{k+1} w\Vert_{L^2} \Vert \nabla^{k+1}(v+\tau w)\Vert_{L^2}\\
\lesssim&\, \varepsilon \Big(\Vert \nabla^{k+1} w\Vert_{L^2}^2+ \Vert \nabla^{k+1}(v+\tau w)\Vert_{L^2}^2\Big), 
\end{aligned}
\end{equation}
where we have used the Sobolev embedding theorem. 

To estimate the second term,  observe that the term  $\Vert \nabla^\ell w\Vert_{L^6}$ can be handled as in \eqref{Interpo_I_1_1}. In other words,
 \begin{equation}\label{Interpo_I_2_1}
\Vert \nabla^\ell w\Vert_{L^6}\lesssim \Vert
\nabla^{k+1}w\Vert _{L^{2}}^{\frac{1+\ell}{1+k}}\left\Vert w\right\Vert
_{L^{2}}^{1-\frac{1+\ell}{1+k}},\qquad 0\leq \ell\leq k.
\end{equation} To estimate the term $\Vert \nabla^{k+1-\ell}v\Vert_{L^3}$, we apply the Sobolev--Gagliardo--Nirenberg inequality \eqref{Sobolev_Gagl_Ni_Interpolation_ineq_Main},  \begin{equation}\label{v_Inter_I_2_1}
\Vert \nabla^{k+1-\ell}v\Vert_{L^3}\lesssim \left\Vert
\nabla^{m_2+1}v\right\Vert _{L^{2}}^{\frac{1+\ell}{1+k}}\Vert \nabla^{k+2} v\Vert
_{L^{2}}^{1-\frac{1+\ell}{1+k}},\qquad 0\leq \ell\leq k
\end{equation}
with 
\begin{equation}
\frac{1}{3}=\frac{k+1-\ell}{3} + \left( \frac{1}{2}-\frac{k+2}{3}\right)\Big(1-\frac{\ell+1}{k+1}\Big)+\left( \frac{1}{2}-\frac{m_2+1}{3}\right)\frac{\ell+1}{k+1}.
\end{equation}
The above equation gives 
\begin{equation}
m_2=\frac{1+k}{2(1+\ell)}\leq  \frac{1+s}{2}.  
\end{equation}
 As before, for $s_0\geq [(1+s)/2]+1$, we have \[\left\Vert
\nabla^{m_2+1}v(t)\right\Vert _{L^{2}}\lesssim \mathcal{E}_{s_0}[\mathbf{U}](t).\]
Hence, by collecting \eqref{Interpo_I_2_1} and \eqref{v_Inter_I_2_1}, we obtain 
\begin{equation}\label{I_2_1_Second_Estimate}
\begin{aligned}
&\sum_{0\leq \ell\leq k}\Vert \nabla^{k+1-\ell}v\Vert_{L^3}\Vert \nabla^\ell w\Vert_{L^6} \Vert \nabla^{k+1}(v+\tau w)\Vert_{L^2}\\
&\lesssim\sum_{0\leq \ell\leq k}\Vert
\nabla^{k+1}w\Vert _{L^{2}}^{\frac{1+\ell}{1+k}}\left\Vert w\right\Vert
_{L^{2}}^{1-\frac{1+\ell}{1+k}}\left\Vert
\nabla^{m_2+1}v\right\Vert _{L^{2}}^{\frac{1+\ell}{1+k}}\Vert \nabla^{k+2} v\Vert
_{L^{2}}^{1-\frac{1+\ell}{1+k}}\Vert \nabla^{k+1}(v+\tau w)\Vert_{L^2}\\
\lesssim&\, \varepsilon \Big(\Vert
\nabla^{k+1}w\Vert _{L^{2}}^2+\Vert
\nabla^{k+2}v\Vert _{L^{2}}^2+\Vert \nabla^{k+1}(v+\tau w)\Vert_{L^2}^2\Big). 
\end{aligned}
\end{equation}
Hence, collecting \eqref{I_2_1_First_Estimate} and \eqref{I_2_1_Second_Estimate}, we obtain
\begin{equation}
\begin{aligned}\label{I_2_1_Main_estimate}
\mathrm{\mathbf{I}}_{2;1}^{(k)}\lesssim \varepsilon \Big(\Vert \nabla^{k+1} w\Vert_{L^2}^2+ \Vert \nabla^{k+1}(v+\tau w)\Vert_{L^2}^2+\Vert
\nabla^{k+2}v\Vert _{L^{2}}^2\Big).
\end{aligned}
\end{equation}
Next we estimate $\mathrm{\mathbf{I}}_{2;2}^{(k)}$. We have  
 \begin{equation}
 \begin{aligned}
\mathrm{\mathbf{I}}_{2;2}^{(k)}=&2\int_{\mathbb{R}^{3}}\Big|\nabla ^{k+1}(\nabla u\nabla
v) \nabla^{k+1} (v+\tau w)\Big|\dx\\
=&\,C\int_{\mathbb{R}^{3}}\Big|\sum_{0\leq \ell\leq k+1}\nabla^{k+2-\ell} u\nabla^{\ell+1} v\nabla^{k+1}(v+\tau w)\Big|\dx.
\end{aligned}
\end{equation}
We split  the above sum into three cases: $\ell=0$, $\ell=k+1$, and $1\leq\ell\leq k$. Thus
\begin{equation}\label{I_2_2_Estimate_1}
\begin{aligned}
\mathrm{\mathbf{I}}_{2;2}^{(k)}\lesssim&\,\int_{\mathbb{R}^{3}}\Big|\nabla^{k+2} u\nabla v\nabla^{k+1}(v+\tau w)\Big|\dx\\
&+\int_{\mathbb{R}^{3}}\Big|\nabla u\nabla^{k+2} v\nabla^{k+1}(v+\tau w)\Big|\dx \\
&+\int_{\mathbb{R}^{3}}\Big|\sum_{1\leq \ell\leq k}\nabla^{k+2-\ell} u\nabla^{\ell+1} v\nabla^{k+1}(v+\tau w)\Big|\dx. 
\end{aligned}
\end{equation}
As before, the first term in \eqref{I_2_2_Estimate_1} is estimated by using H\"older's inequality and the Sobolev embedding theorem, 
 \begin{equation}\label{Estimate_l_0}
\begin{aligned}
\int_{\mathbb{R}^{3}}\Big|\nabla^{k+2} u\nabla v\nabla^{k+1}(v+\tau w)\Big|\dx\lesssim &\,\Vert \nabla v\Vert_{L^\infty}\Vert \nabla^{k+2}u\Vert_{L^2}\Vert\nabla^{k+1}(v+\tau w) \Vert_{L^2}\\
\lesssim&\, \varepsilon \Big(\Vert \nabla^{k+2}u\Vert_{L^2}^2+\Vert\nabla^{k+1}(v+\tau w) \Vert_{L^2}^2\Big). 
\end{aligned}
\end{equation}
Similarly, we estimate the second term on the right-hand side of \eqref{I_2_2_Estimate_1} as 
 \begin{equation}\label{Estimate_l_k+1}
 \begin{aligned}  
\int_{\mathbb{R}^{3}}\Big|\nabla u\nabla^{k+2} v\nabla^{k+1}(v+\tau w)\Big|\dx \lesssim&\, \Vert \nabla u\Vert_{L^\infty}\Vert \nabla^{k+2} v\Vert_{L^2} \Vert \nabla^{k+1}(v+\tau w)\Vert_{L^2}\\
\lesssim&\, \varepsilon \Big(\Vert \nabla^{k+2} v\Vert_{L^2}^2+ \Vert \nabla^{k+1}(v+\tau w)\Vert_{L^2}^2\Big).
\end{aligned}
\end{equation}
For the third  term on the right-hand side of \eqref{I_2_2_Estimate_1}, we write 
\begin{equation}
\begin{aligned}
&\int_{\mathbb{R}^{3}}\Big|\sum_{1\leq \ell\leq k}\nabla^{k+2-\ell} u\nabla^{\ell+1} v\nabla^{k+1}(v+\tau w)\Big|\dx. \\
\lesssim&\, \sum_{1\leq \ell\leq k}\Vert \nabla^{k+2-\ell}u\Vert_{L^3}\Vert \nabla^{\ell+1} v\Vert_{L^6} \Vert \nabla^{k+1}(v+\tau w)\Vert_{L^2}.
\end{aligned}
\end{equation}
By applying the Gagliardo--Nirenberg inequality \eqref{Interpolation_inequality}, we obtain 
\begin{equation}
\Vert \nabla^{\ell+1} v\Vert_{L^6}\lesssim \Vert  v\Vert_{L^\infty}^{1-\frac{2\ell+1}{2k+1}}\Vert  \nabla^{k+2}v\Vert_{L^2}^{\frac{2\ell+1}{2k+1}},\qquad 1\leq \ell\leq k.
\end{equation}  
We also have, by suing \eqref{Sobolev_Gagl_Ni_Interpolation_ineq_Main}, 
\begin{equation}
\Vert \nabla^{k+2-\ell}u\Vert_{L^3}\lesssim \left\Vert
\nabla^{m_3+1}u\right\Vert _{L^{2}}^{\frac{2\ell+1}{2k+1}}\Vert \nabla^{k+2} u\Vert
_{L^{2}}^{1-\frac{2\ell+1}{2k+1}},\qquad 1\leq \ell\leq k
\end{equation}
with 
\begin{equation}
\frac{1}{3}=\frac{k+2-\ell}{3} + \left( \frac{1}{2}-\frac{k+2}{3}\right)\Big(1-\frac{2\ell+1}{2k+1}\Big)+\left( \frac{1}{2}-\frac{m_3+1}{3}\right)\frac{2\ell+1}{2k+1}.
\end{equation}
This yields 
\begin{equation}
m_3=\frac{1}{2}+\frac{1+2k}{1+2l}\leq \frac{2k}{3}+\frac{5}{6}\leq \frac{2s}{3}+\frac{5}{6},\quad \text{since}\quad \ell\geq 1.  
\end{equation}
Hence, for $s_0\geq [2s/3]+1$, we have $\left\Vert
\nabla^{m_3+1}u(t)\right\Vert _{L^{2}}\lesssim \mathcal{E}_{s_0}(t)$. Also, using the Sobolev embedding theorem together with  \eqref{boot_strap_Assum}, we obtain 
$\Vert  v\Vert_{L^\infty}\lesssim \mathcal{E}_{s_0}[\mathbf{U}](t)\lesssim \varepsilon $. Consequently, we obtain  
\begin{equation}\label{Term_l_1}
\begin{aligned}
&\int_{\mathbb{R}^{3}}\Big|\sum_{1\leq \ell\leq k}\nabla^{k+2-\ell} u\nabla^{\ell+1} v\nabla^{k+1}(v+\tau w)\Big|\dx. \\
\lesssim&\sum_{1\leq \ell\leq k}\left\Vert
\nabla^{m_3+1}u\right\Vert _{L^{2}}^{\frac{2\ell+1}{2k+1}}\Vert  v\Vert_{L^\infty}^{1-\frac{2\ell+1}{2k+1}}\Vert \nabla^{k+2} u\Vert
_{L^{2}}^{1-\frac{2\ell+1}{2k+1}}\Vert  \nabla^{k+2}v\Vert_{L^2}^{\frac{2\ell+1}{2k+1}} \Vert \nabla^{k+1}(v+\tau w)\Vert_{L^2}\\
\lesssim&\, \varepsilon \Big(\Vert \nabla^{k+2} u\Vert
_{L^{2}}^2+\Vert  \nabla^{k+2}v\Vert_{L^2}^2+\Vert \nabla^{k+1}(v+\tau w)\Vert_{L^2}^2\Big). 
\end{aligned}
\end{equation}
Therefore, from \eqref{Estimate_l_0}, \eqref{Estimate_l_k+1} and \eqref{Term_l_1}, we  deduce that 
\begin{equation}\label{I_2_2_Main_Estimate}
\begin{aligned}
\mathrm{\mathbf{I}}_{2;2}^{(k)}\lesssim&\,\varepsilon \Big(\Vert \nabla^{k+2}u\Vert_{L^2}^2+\Vert \nabla^{k+2} v\Vert_{L^2}^2+\Vert\nabla^{k+1}(v+\tau w) \Vert_{L^2}^2\Big). 
\end{aligned}
\end{equation}
Hence, \eqref{I_2_Estimate} holds by collecting \eqref{I_2_1_Main_estimate} and \eqref{I_2_2_Main_Estimate}. This finishes the proof of Lemma \ref{I_2_Lemma}. 
\end{proof}

The estimate of $\mathrm{\mathbf{I}}_4^{(k)}$ can be done as the one of $\mathrm{\mathbf{I}}_2^{(k)}$, we thus omit the details and just state the result. 
\begin{lemma}[Estimate of $\mathrm{\mathbf{I}}_4^{(k)}$]\label{Lemma_I_4}
For any $1\leq k\leq s$, it holds that 
\begin{equation}\label{I_4_Estimate}
\begin{aligned}
\mathrm{\mathbf{I}}_4^{(k)}\lesssim &\,\varepsilon \Big(\Vert \nabla^{k+2}u\Vert_{L^2}^2+\Vert \nabla^{k+1} w\Vert_{L^2}^2+ \Vert \nabla^{k+1}v\Vert_{L^2}^2+\Vert
\nabla^{k+2}v\Vert _{L^{2}}\Big)\\
\lesssim&\, \varepsilon  \mathscr{D}_k^2[\mathbf{U}](t). 
\end{aligned}
\end{equation}
\end{lemma}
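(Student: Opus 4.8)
The plan is to mirror the proof of Lemma~\ref{I_2_Lemma} essentially verbatim. The only structural difference between $\mathrm{\mathbf{I}}_4^{(k)}$ and $\mathrm{\mathbf{I}}_2^{(k)}$ is that the final factor under the integral is $\nabla^{k+1}v$ rather than $\nabla^{k+1}(v+\tau w)$; since $\Vert\nabla^{k+1}v\Vert_{L^2}^2$ is itself one of the components of the dissipation $\mathscr{D}_k^2[\mathbf{U}](t)$, every step used for $\mathrm{\mathbf{I}}_2^{(k)}$ transfers directly. First I would recall from \eqref{nabla_R_1_k} that $\nabla\mathrm{R}^{(k)}=\nabla^{k+1}\!\left(\frac{B}{A}vw+2\nabla u\cdot\nabla v\right)$, and split
\[
\mathrm{\mathbf{I}}_4^{(k)}=\int_{\mathbb{R}^{3}}\Big|\nabla^{k+1}\Big(\tfrac{B}{A}vw+2\nabla u\cdot\nabla v\Big)\,\nabla^{k+1}v\Big|\dx=:\mathrm{\mathbf{I}}_{4;1}^{(k)}+\mathrm{\mathbf{I}}_{4;2}^{(k)},
\]
with $\mathrm{\mathbf{I}}_{4;1}^{(k)}$ collecting the $vw$-contribution and $\mathrm{\mathbf{I}}_{4;2}^{(k)}$ the $\nabla u\cdot\nabla v$-contribution.

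For $\mathrm{\mathbf{I}}_{4;1}^{(k)}$ I would expand $\nabla^{k+1}(vw)$ by the Leibniz rule, isolate the top-order term $\ell=k+1$, and bound it through H\"older's inequality and the Sobolev embedding exactly as in \eqref{I_2_1_First_Estimate}, obtaining $\Vert v\Vert_{L^\infty}\Vert\nabla^{k+1}w\Vert_{L^2}\Vert\nabla^{k+1}v\Vert_{L^2}\lesssim\varepsilon(\Vert\nabla^{k+1}w\Vert_{L^2}^2+\Vert\nabla^{k+1}v\Vert_{L^2}^2)$. For $0\le\ell\le k$ I would apply the Gagliardo--Nirenberg inequality \eqref{Interpolation_inequality} to $\Vert\nabla^\ell w\Vert_{L^6}$ and the Sobolev--Gagliardo--Nirenberg inequality \eqref{Sobolev_Gagl_Ni_Interpolation_ineq_Main} to $\Vert\nabla^{k+1-\ell}v\Vert_{L^3}$, reproducing the intermediate order $m_2=\frac{1+k}{2(1+\ell)}\le\frac{1+s}{2}$ so that $\Vert\nabla^{m_2+1}v\Vert_{L^2}\lesssim\mathcal{E}_{s_0}[\mathbf{U}](t)\lesssim\varepsilon$ under \eqref{boot_strap_Assum}, and then Young's inequality to land in the dissipation. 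For $\mathrm{\mathbf{I}}_{4;2}^{(k)}$ I would split the sum into $\ell=0$, $\ell=k+1$, and $1\le\ell\le k$: the two endpoint terms are handled by H\"older plus Sobolev embedding as in \eqref{Estimate_l_0}--\eqref{Estimate_l_k+1}, and the middle range by the same interpolation as in \eqref{Term_l_1}, yielding the order $m_3=\frac12+\frac{1+2k}{1+2\ell}\le\frac{2s}{3}+\frac56$, which is precisely the constraint forcing $s_0\ge[2s/3]+1$.

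Collecting all contributions and applying Young's inequality gives
\[
\mathrm{\mathbf{I}}_4^{(k)}\lesssim\varepsilon\Big(\Vert\nabla^{k+2}u\Vert_{L^2}^2+\Vert\nabla^{k+1}w\Vert_{L^2}^2+\Vert\nabla^{k+1}v\Vert_{L^2}^2+\Vert\nabla^{k+2}v\Vert_{L^2}^2\Big),
\]
and recognizing each term as a component of $\mathscr{D}_k^2[\mathbf{U}](t)$ yields \eqref{I_4_Estimate}. I do not expect any genuine obstacle here: because the nonlinearity is identical to that treated for $\mathrm{\mathbf{I}}_2^{(k)}$, no new interpolation exponents arise, and the only point worth checking is bookkeeping, namely that replacing $\nabla^{k+1}(v+\tau w)$ by $\nabla^{k+1}v$ still lands in the dissipation, which it does since $\Vert\nabla^{k+1}v\Vert_{L^2}^2$ appears explicitly in $\mathscr{D}_k^2[\mathbf{U}](t)$. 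This is exactly why the paper omits the details and merely states the result.
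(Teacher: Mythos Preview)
Your proposal is correct and matches the paper's own approach exactly: the paper explicitly states that the estimate of $\mathrm{\mathbf{I}}_4^{(k)}$ can be done as the one of $\mathrm{\mathbf{I}}_2^{(k)}$ and omits the details. Your identification of the sole structural difference (the final factor $\nabla^{k+1}v$ in place of $\nabla^{k+1}(v+\tau w)$) and your verification that $\Vert\nabla^{k+1}v\Vert_{L^2}^2$ already sits in $\mathscr{D}_k^2[\mathbf{U}](t)$ are precisely the bookkeeping points that justify the omission.
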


Our goal now is to estimate $\mathrm{\mathbf{I}}_3^{(k)}$.  
\begin{lemma}[Estimate of $\mathrm{\mathbf{I}}_3^{(k)}$]\label{Lemma_I_3} 
For any $1\leq k\leq s$, it holds that 
\begin{equation}\label{I_3_Estimate}
\begin{aligned}
\mathrm{\mathbf{I}}_3^{(k)}\lesssim &\,\varepsilon\Big(\Vert \nabla^{k+1} v\Vert
_{L^{2}}^{2}+\Vert
\nabla^{k+2}u\Vert _{L^{2}}^{2} +\Vert
\nabla^{k+1}u\Vert _{L^{2}}^{2}\Big.\\
\Big.&+\Vert
\nabla^{k+1}w\Vert _{L^{2}}^{2}+\Vert \Delta\nabla^{k}(u+\tau v)\Vert_{L^2}^2\Big)\\
\lesssim& \,\varepsilon\big( \mathscr{D}^2_{k-1}[\mathbf{U}](t)+\mathscr{D}^2_k[\mathbf{U}](t)\big). 
\end{aligned}
\end{equation}
\end{lemma}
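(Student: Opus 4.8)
The plan is to follow the scheme of Lemma~\ref{Lemma_I_1}, but to exploit that the factor $\Delta\nabla^k(u+\tau v)$ accompanying $\mathrm{R}^{(k)}$ in the definition \eqref{I_terms} is already a dissipative quantity of order $k+2$ appearing directly in $\mathscr{D}^2_k[\mathbf{U}](t)$. For this reason I would \emph{not} integrate by parts; instead I peel this factor off in $L^2$ via H\"older's inequality,
\[
\mathrm{\mathbf{I}}_3^{(k)}\lesssim\Big\Vert\nabla^k\Big(\tfrac{B}{A}vw+2\nabla u\cdot\nabla v\Big)\Big\Vert_{L^2}\,\big\Vert\Delta\nabla^k(u+\tau v)\big\Vert_{L^2},
\]
and split the nonlinearity into $\mathrm{\mathbf{I}}_{3;1}^{(k)}$ (the $vw$ part) and $\mathrm{\mathbf{I}}_{3;2}^{(k)}$ (the $\nabla u\cdot\nabla v$ part). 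Since $\Vert\Delta\nabla^k(u+\tau v)\Vert_{L^2}$ enters each contribution linearly, once the remaining factor is bounded by $\varepsilon$ times a product of high-order norms whose exponents sum to one, a single application of Young's inequality returns $\Vert\Delta\nabla^k(u+\tau v)\Vert_{L^2}^2$ together with squares of the admissible higher-order norms.

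For $\mathrm{\mathbf{I}}_{3;1}^{(k)}$ I expand $\nabla^k(vw)=\sum_{0\le\ell\le k}C_k^\ell\,\nabla^{k-\ell}v\,\nabla^\ell w$ and estimate each summand by $\Vert\nabla^{k-\ell}v\Vert_{L^3}\Vert\nabla^\ell w\Vert_{L^6}$. This is exactly the computation \eqref{Interpo_I_1_1}--\eqref{I_1_1_Estimate} shifted up by one derivative: the Gagliardo--Nirenberg inequality \eqref{Interpolation_inequality} gives $\Vert\nabla^\ell w\Vert_{L^6}\lesssim\Vert\nabla^{k+1}w\Vert_{L^2}^{\frac{1+\ell}{k+1}}\Vert w\Vert_{L^2}^{1-\frac{1+\ell}{k+1}}$, while the Sobolev--Gagliardo--Nirenberg inequality \eqref{Sobolev_Gagl_Ni_Interpolation_ineq_Main} gives $\Vert\nabla^{k-\ell}v\Vert_{L^3}\lesssim\Vert\nabla^{m_0}v\Vert_{L^2}^{\frac{1+\ell}{k+1}}\Vert\nabla^{k+1}v\Vert_{L^2}^{\frac{k-\ell}{k+1}}$ with $m_0=\frac{k+1}{2(1+\ell)}\le\frac{s+1}{2}$. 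The exponents on $\nabla^{k+1}w$ and $\nabla^{k+1}v$ add up to $1$; bounding the low-order anchors $\Vert\nabla^{m_0}v\Vert_{L^2}$ and $\Vert w\Vert_{L^2}$ by $\mathcal{E}_{s_0}[\mathbf{U}](t)\lesssim\varepsilon$ (using \eqref{boot_strap_Assum} together with the hypothesis on $s_0$) and then applying Young's inequality yields $\mathrm{\mathbf{I}}_{3;1}^{(k)}\lesssim\varepsilon\big(\Vert\nabla^{k+1}v\Vert_{L^2}^2+\Vert\nabla^{k+1}w\Vert_{L^2}^2+\Vert\Delta\nabla^k(u+\tau v)\Vert_{L^2}^2\big)$.

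For $\mathrm{\mathbf{I}}_{3;2}^{(k)}$ I expand $\nabla^k(\nabla u\cdot\nabla v)=\sum_{0\le\ell\le k}C_k^\ell\,\nabla^{k+1-\ell}u\,\nabla^{\ell+1}v$ and split into three regimes as in \eqref{I_2_2_Estimate_1}. The endpoint $\ell=0$ is estimated by $\Vert\nabla^{k+1}u\Vert_{L^2}\Vert\nabla v\Vert_{L^\infty}\lesssim\varepsilon\Vert\nabla^{k+1}u\Vert_{L^2}$ and the endpoint $\ell=k$ by $\Vert\nabla u\Vert_{L^\infty}\Vert\nabla^{k+1}v\Vert_{L^2}\lesssim\varepsilon\Vert\nabla^{k+1}v\Vert_{L^2}$, using the Sobolev embedding theorem; on the interior range $1\le\ell\le k-1$ I use $\Vert\nabla^{k+1-\ell}u\Vert_{L^3}\Vert\nabla^{\ell+1}v\Vert_{L^6}$ and interpolate as in \eqref{I_1_2_Estimate_1}, now with high-order anchors $\nabla^{k+2}u$ and $\nabla^{k+1}v$; the low-order anchor $\nabla^{m_1+1}u$ satisfies $m_1=\frac{3(k+1)}{2(\ell+2)}\le\frac{k+1}{2}$ since $\ell\ge1$, so it is again controlled by $\mathcal{E}_{s_0}[\mathbf{U}](t)\lesssim\varepsilon$. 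Young's inequality produces $\varepsilon\big(\Vert\nabla^{k+2}u\Vert_{L^2}^2+\Vert\nabla^{k+1}v\Vert_{L^2}^2+\Vert\Delta\nabla^k(u+\tau v)\Vert_{L^2}^2\big)$. Combining the two parts gives the first line of \eqref{I_3_Estimate}; the second line then follows because every listed norm lies in $\mathscr{D}^2_{k-1}[\mathbf{U}](t)+\mathscr{D}^2_k[\mathbf{U}](t)$: writing $u=(u+\tau v)-\tau v$ and $w=\tfrac1\tau\big((v+\tau w)-v\big)$ one has $\Vert\nabla^{k+2}u\Vert_{L^2}^2,\Vert\nabla^{k+1}w\Vert_{L^2}^2\lesssim\mathscr{D}^2_k[\mathbf{U}](t)$, while $\Vert\nabla^{k+1}u\Vert_{L^2}^2\lesssim\Vert\Delta\nabla^{k-1}(u+\tau v)\Vert_{L^2}^2+\Vert\nabla^{k+1}v\Vert_{L^2}^2\lesssim\mathscr{D}^2_{k-1}[\mathbf{U}](t)+\mathscr{D}^2_k[\mathbf{U}](t)$.

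\textbf{Main obstacle.} The delicate point is the interior range of $\mathrm{\mathbf{I}}_{3;2}^{(k)}$, where one must simultaneously (i) load the two highest derivatives onto $u$, i.e.\ anchor at $\nabla^{k+2}u$, so as to reproduce a norm compatible with $\Vert\Delta\nabla^k(u+\tau v)\Vert_{L^2}$ rather than an uncontrolled derivative of $v$ of order $k+2$; (ii) choose the interpolation exponents so that the two high-order powers sum to exactly $1$, which is precisely what lets Young's inequality close into clean squares; and (iii) verify that every low-order anchor index stays within the range controlled by $\mathcal{E}_{s_0}[\mathbf{U}](t)$. One must also be mindful that at $\ell=k-1$ the $u$-interpolation degenerates to a pure Sobolev embedding (the $\nabla^{k+2}u$-exponent vanishes), so no spurious constraint arises there. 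It is requirement (iii), across all of $\mathrm{\mathbf{I}}_1^{(k)},\dots,\mathrm{\mathbf{I}}_5^{(k)}$, that ties the admissible anchors to the hypothesis $s_0=\max\{[2s/3]+1,[s/2]+2\}$; for $\mathrm{\mathbf{I}}_3^{(k)}$ alone the anchors never exceed order $\frac{s+1}{2}+1$, so the constraint it imposes is the milder $s_0\ge[\frac{s+1}{2}]+1$.
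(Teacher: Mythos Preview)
Your proposal is correct and follows essentially the same approach as the paper: the same split into $\mathrm{\mathbf{I}}_{3;1}^{(k)}$ and $\mathrm{\mathbf{I}}_{3;2}^{(k)}$, the same $L^3\times L^6$ H\"older splitting, the same Gagliardo--Nirenberg and Sobolev--Gagliardo--Nirenberg interpolations with the identical anchors and exponents (your $m_0$ and $m_1$ coincide with the paper's $m_4$ and $m_5$), and the same three-case treatment of $\mathrm{\mathbf{I}}_{3;2}^{(k)}$. Your extra remarks on the degenerate endpoint $\ell=k-1$ and on why each resulting norm lies in $\mathscr{D}^2_{k-1}+\mathscr{D}^2_k$ are useful clarifications that the paper leaves implicit.
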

\begin{proof}

We have 
\begin{equation}
\begin{aligned}
\mathrm{\mathbf{I}}_3^{(k)}=&\,\int_{\mathbb{R}^{3}}|\mathrm{R}^{(k)}\Delta \left( U+\tau
V\right)| \dx\\
\lesssim &\,\int_{\mathbb{R}^{3}}\Big|\sum_{0\leq \ell\leq k} \nabla^{k-\ell} v\nabla^\ell w\Delta\nabla^{k}(u+\tau v)\Big|\dx\\
&+\int_{\mathbb{R}^{3}}\Big|\sum_{0\leq \ell\leq k}\nabla^{k-\ell} \nabla u\nabla^\ell\nabla  v\Delta \nabla^{k}(u+\tau v)\Big|\dx\\
=&\, \mathrm{\mathbf{I}}_{3;1}^{(k)}+\mathrm{\mathbf{I}}_{3;2}^{(k)}. 
\end{aligned}
\end{equation}

First, we estimate $\mathrm{\mathbf{I}}_{3;1}^{(k)}$. We have 
\begin{equation}\label{I_3_1_1}
\begin{aligned}
\mathrm{\mathbf{I}}_{3;1}^{(k)}\lesssim \sum_{0\leq \ell\leq k}\Vert \nabla^{k-\ell}v\Vert_{L^3}\Vert \nabla^\ell w\Vert_{L^6} \Vert\Delta \nabla^{k}(u+\tau v)\Vert_{L^2}.
\end{aligned}
\end{equation}
Using \eqref{Interpolation_inequality},  we write 
 \begin{equation}\label{Interpo_I_1_3}
\Vert \nabla^\ell w\Vert_{L^6}\lesssim \Vert
\nabla^{k+1}w\Vert _{L^{2}}^{\frac{1+\ell}{1+k}}\left\Vert w\right\Vert
_{L^{2}}^{1-\frac{1+\ell}{1+k}},\qquad 0\leq \ell\leq k.
\end{equation}
Applying \eqref{Sobolev_Gagl_Ni_Interpolation_ineq_Main}, we obtain 
 \begin{equation}\label{Interpo_I_3_2}
\Vert\nabla^{k-\ell} v\Vert_{L^3}\lesssim \left\Vert
\nabla^{m_4}v\right\Vert _{L^{2}}^{\frac{\ell+1}{k+1}}\Vert \nabla^{k+1} v\Vert
_{L^{2}}^{1-\frac{\ell+1}{k+1}},\qquad 0\leq \ell\leq k
\end{equation}
with 
\begin{equation}
\frac{1}{3}=\frac{k-\ell}{3}+\left( \frac{1}{2}-\frac{m_4}{3}\right)\frac{\ell+1}{k+1} + \left( \frac{1}{2}-\frac{k+1}{3}\right)\Big(1-\frac{\ell+1}{k+1}\Big).
\end{equation}
This results in 
\begin{equation}
m_4=\frac{k+1}{2(1+\ell)}\leq \frac{k+1}{2}\leq \frac{s+1}{2}. 
\end{equation}
Therefore, for $s_0\geq [s/2]+1$, we have $\left\Vert
\nabla^{m_4}v(t)\right\Vert _{L^{2}}\lesssim \mathcal{E}_{s_0}[\mathbf{U}](t) $. 
Hence,  inserting \eqref{Interpo_I_1_3} and \eqref{Interpo_I_3_2} into \eqref{I_3_1_1}, we obtain, by making use of \eqref{boot_strap_Assum},  
\begin{equation}\label{I_3_1_2}
\begin{aligned}
\mathrm{\mathbf{I}}_{3;1}^{(k)}\lesssim &\,\sum_{0\leq \ell\leq k}\left\Vert
\nabla^{m_4}v\right\Vert _{L^{2}}^{\frac{\ell+1}{k+1}}\Vert \nabla^{k+1} v\Vert
_{L^{2}}^{1-\frac{\ell+1}{k+1}}\Vert
\nabla^{k+1}w\Vert _{L^{2}}^{\frac{1+\ell}{1+k}}\left\Vert w\right\Vert
_{L^{2}}^{1-\frac{1+\ell}{1+k}} \Vert\Delta \nabla^{k}(v+\tau w)\Vert_{L^2}\\
\lesssim&\, \varepsilon\Big(\Vert \nabla^{k+1} v\Vert
_{L^{2}}^{2}+\Vert
\nabla^{k+1}w\Vert _{L^{2}}^{2} +\Vert \Delta\nabla^{k}(u+\tau v)\Vert_{L^2}^2\Big). 
\end{aligned}
\end{equation}
Next, we estimate $\mathrm{\mathbf{I}}_{3;2}^{(k)}$. Recall that 
\begin{equation}\label{I_3_2_Estimate_1}
\begin{aligned}
\mathrm{\mathbf{I}}_{3;2}^{(k)}=&\,C\int_{\mathbb{R}^{3}}\Big|\sum_{0\leq \ell\leq k}\nabla^{k-\ell} \nabla u\nabla^\ell\nabla  v\Delta \nabla^{k}(u+\tau v)\Big|\dx\\
\lesssim &\,\int_{\mathbb{R}^{3}}\Big|\nabla^{k} \nabla u\nabla  v\Delta \nabla^{k}(u+\tau v)\Big|\dx\\
&+\int_{\mathbb{R}^{3}}\Big|  \nabla u\nabla^k\nabla  v\Delta \nabla^{k}(u+\tau v)\Big|\dx\\
&+\int_{\mathbb{R}^{3}}\Big|\sum_{1\leq \ell\leq k-1}\nabla^{k-\ell} \nabla u\nabla^\ell\nabla  v\Delta \nabla^{k}(u+\tau v)\Big|\dx.
\end{aligned}
\end{equation}
We estimate the first term on the right-hand side of \eqref{I_3_2_Estimate_1} as
\begin{equation}\label{First_Term_I_3_2}
\begin{aligned}
\int_{\mathbb{R}^{3}}\Big|\nabla^{k} \nabla u\nabla  v\Delta \nabla^{k}(u+\tau v)\Big|\dx\lesssim&\, \Vert \nabla v\Vert_{L^\infty}\Vert \nabla^{k+1}u\Vert_{L^2}\Vert\Delta \nabla^{k}(u+\tau v) \Vert_{L^2}\\
\lesssim&\,\varepsilon\Big(\Vert
\nabla^{k+1}u\Vert _{L^{2}}^{2} +\Vert \Delta\nabla^{k}(u+\tau v)\Vert_{L^2}^2\Big). 
\end{aligned}
\end{equation}
The second term on the right-hand side of \eqref{I_3_2_Estimate_1} is estimated as 
\begin{equation}\label{Second_Term_I_3_2}
\begin{aligned}
\int_{\mathbb{R}^{3}}\Big|  \nabla u\nabla^k\nabla  v\Delta \nabla^{k}(u+\tau v)\Big|\dx\lesssim &\, \Vert \nabla u\Vert_{L^\infty}\Vert \nabla^{k+1}v\Vert_{L^2}\Vert\Delta \nabla^{k}(u+\tau v) \Vert_{L^2}\\
\lesssim &\,\varepsilon\Big(\Vert
\nabla^{k+1}v\Vert _{L^{2}}^{2} +\Vert \Delta\nabla^{k}(u+\tau v)\Vert_{L^2}^2\Big).
\end{aligned}
\end{equation}

For the last term on the right-hand side of \eqref{I_3_2_Estimate_1}, we have 
\begin{equation}\label{I_3_2_Estimate_1}
 \begin{aligned}
&\int_{\mathbb{R}^{3}}\Big|\sum_{1\leq \ell\leq k-1}\nabla^{k-\ell} \nabla u\nabla^\ell\nabla  v\Delta \nabla^{k}(u+\tau v)\Big|\dx\\
&+\sum_{1\leq \ell\leq k-1}\Vert \nabla^{k+1-\ell}u\Vert_{L^3}\Vert \nabla^{\ell+1} v\Vert_{L^6} \Vert\Delta \nabla^{k}(u+\tau v)\Vert_{L^2}. 
\end{aligned}
\end{equation}

 We have by exploiting \eqref{Interpolation_inequality},
 \begin{equation}
\Vert \nabla^{\ell+1} v\Vert_{L^6}\lesssim \Vert  v\Vert_{L^2}^{1-\frac{2+\ell}{1+k}}\Vert  \nabla^{k+1}v\Vert_{L^2}^{\frac{2+\ell}{1+k}},\qquad 1\leq \ell\leq k-1. 
\end{equation} 
As before, we apply \eqref{Sobolev_Gagl_Ni_Interpolation_ineq_Main} and estimate $\Vert \nabla^{k+1-\ell}u\Vert_{L^3}$ as follows: 
\begin{equation}
\Vert \nabla^{k+1-\ell}u\Vert_{L^3}\lesssim \left\Vert
\nabla^{m_5+1}u\right\Vert _{L^{2}}^{\frac{2+\ell}{1+k}}\Vert \nabla^{k+2} u\Vert
_{L^{2}}^{1-\frac{2+\ell}{1+k}},\qquad 1\leq \ell\leq k-1,
\end{equation}
where \begin{equation}
\frac{1}{3}=\frac{k+1-\ell}{3} + \left( \frac{1}{2}-\frac{k+2}{3}\right)\Big(1-\frac{2+\ell}{1+k}\Big)+\left( \frac{1}{2}-\frac{m_5+1}{3}\right)\frac{2+\ell}{1+k},
\end{equation}
which implies 
\begin{equation}
m_5=\frac{3(1+k)}{2(2+\ell)}\leq \frac{k+1}{2}\leq \frac{s+1}{2},\quad \text{since}\quad \ell\geq 1.  
\end{equation}
Hence, as before, this implies that  for $s_0\geq [s/2]+1$, we have $\left\Vert
\nabla^{m_5+1}u(t)\right\Vert _{L^{2}}\lesssim \mathcal{E}_{s_0}[\mathbf{U}](t) $. Consequently,  we obtain from above 
\begin{equation}\label{Third_Term_I_3_1}
\begin{aligned}
&\sum_{1\leq \ell\leq k-1}\Vert \nabla^{k+1-\ell}u\Vert_{L^3}\Vert \nabla^{\ell+1} v\Vert_{L^6} \Vert\Delta \nabla^{k}(u+\tau v)\Vert_{L^2}\\
\lesssim&\, \sum_{1\leq \ell\leq k-1}\left\Vert
\nabla^{m_5+1}u\right\Vert _{L^{2}}^{\frac{2+\ell}{1+k}}\Vert \nabla^{k+2} u\Vert
_{L^{2}}^{1-\frac{2+\ell}{1+k}}\Vert  v\Vert_{L^2}^{1-\frac{2+\ell}{1+k}}\Vert  \nabla^{k+1}v\Vert_{L^2}^{\frac{2+\ell}{1+k}}\Vert\Delta \nabla^{k}(u+\tau v)\Vert_{L^2}\\
\lesssim&\,\varepsilon\Big(\Vert \nabla^{k+1} v\Vert
_{L^{2}}^{2}+\Vert
\nabla^{k+2}u\Vert _{L^{2}}^{2} +\Vert \Delta\nabla^{k}(u+\tau v)\Vert_{L^2}^2\Big).  
\end{aligned}
\end{equation}
Therefore,  from \eqref{First_Term_I_3_2}, \eqref{Second_Term_I_3_2} and \eqref{Third_Term_I_3_1}, we deduce that 
\begin{equation}\label{I_3_2_Main_Estimate}
\begin{aligned}
\mathrm{\mathbf{I}}_{3;2}^{(k)}\lesssim \varepsilon\Big(\Vert \nabla^{k+1} v\Vert
_{L^{2}}^{2}+\Vert
\nabla^{k+2}u\Vert _{L^{2}}^{2} +\Vert
\nabla^{k+1}u\Vert _{L^{2}}^{2}+\Vert \Delta\nabla^{k}(u+\tau v)\Vert_{L^2}^2\Big).
\end{aligned}
\end{equation}
Putting together \eqref{I_3_1_2} and \eqref{I_3_2_Main_Estimate} yields \eqref{I_3_Estimate}. 
\end{proof}
\noindent Next we derive a bound for $\mathrm{\mathbf{I}}_5^{(k)}$. 
\begin{lemma}[Estimate of $\mathrm{\mathbf{I}}_5^{(k)}$]\label{Lemma_I_5}
For any $1\leq k\leq s$, it holds that 
\begin{equation}\label{I_5_Estimate}
\begin{aligned}
\mathrm{\mathbf{I}}_5^{(k)}\lesssim &\,\varepsilon\Big(\Vert \nabla^{k+1} v\Vert
_{L^{2}}^{2}+\Vert
\nabla^{k+2}u\Vert _{L^{2}}^{2} +\Vert
\nabla^{k+1}u\Vert _{L^{2}}^{2}\Big.\\
\Big.&+\Vert
\nabla^{k+1}w\Vert _{L^{2}}^{2}+\Vert \Delta\nabla^{k}(u+\tau v)\Vert_{L^2}^2\Big)\\
\lesssim& \,\varepsilon\big( \mathscr{D}^2_{k-1}[\mathbf{U}](t)+\mathscr{D}^2_k[\mathbf{U}](t)\big). 
\end{aligned}
\end{equation}
\end{lemma}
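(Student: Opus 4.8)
The plan is to estimate $\mathrm{\mathbf{I}}_5^{(k)}$ by the exact same mechanism used for $\mathrm{\mathbf{I}}_3^{(k)}$ in Lemma~\ref{Lemma_I_3}, exploiting that both terms have the structure $\int|\mathrm{R}^{(k)}\cdot(\text{test factor})|\,\dx$; the only difference is that the test factor $\Delta\nabla^k(u+\tau v)$ is now replaced by $W=\nabla^k w$. Since $\Vert\nabla^k w\Vert_{L^2}$ belongs to the dissipation $\mathscr{D}^2_k[\mathbf{U}]$ just as $\Vert\Delta\nabla^k(u+\tau v)\Vert_{L^2}$ does, the same closing argument applies. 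First I would expand $\mathrm{R}^{(k)}=\nabla^k\big(\tfrac{B}{A}vw+2\nabla u\cdot\nabla v\big)$ by the Leibniz rule and split $\mathrm{\mathbf{I}}_5^{(k)}\le\mathrm{\mathbf{I}}_{5;1}^{(k)}+\mathrm{\mathbf{I}}_{5;2}^{(k)}$ according to the two nonlinearities.

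For $\mathrm{\mathbf{I}}_{5;1}^{(k)}$ (the $vw$ part), I would apply H\"older's inequality with exponents $(3,6,2)$ to reach $\sum_{0\le\ell\le k}\Vert\nabla^{k-\ell}v\Vert_{L^3}\Vert\nabla^\ell w\Vert_{L^6}\Vert\nabla^k w\Vert_{L^2}$, and then interpolate exactly as in \eqref{Interpo_I_1_3} and \eqref{Interpo_I_3_2}: the Gagliardo--Nirenberg inequality \eqref{Interpolation_inequality} bounds $\Vert\nabla^\ell w\Vert_{L^6}$ by $\Vert\nabla^{k+1}w\Vert_{L^2}^{\frac{1+\ell}{1+k}}\Vert w\Vert_{L^2}^{1-\frac{1+\ell}{1+k}}$, while the Sobolev--Gagliardo--Nirenberg inequality \eqref{Sobolev_Gagl_Ni_Interpolation_ineq_Main} bounds $\Vert\nabla^{k-\ell}v\Vert_{L^3}$ by $\Vert\nabla^{m_4}v\Vert_{L^2}^{\frac{\ell+1}{k+1}}\Vert\nabla^{k+1}v\Vert_{L^2}^{1-\frac{\ell+1}{k+1}}$ with the same $m_4=\frac{k+1}{2(1+\ell)}\le\frac{s+1}{2}$. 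The middle-order factors $\Vert\nabla^{m_4}v\Vert_{L^2}$ and $\Vert w\Vert_{L^2}$ are absorbed into $\mathcal{E}_{s_0}[\mathbf{U}](t)\lesssim\varepsilon$ via the bootstrap hypothesis \eqref{boot_strap_Assum}; since the three surviving top-order factors $\Vert\nabla^{k+1}v\Vert_{L^2}$, $\Vert\nabla^{k+1}w\Vert_{L^2}$, $\Vert\nabla^k w\Vert_{L^2}$ carry exponents summing to $2$, Young's inequality produces $\varepsilon\big(\Vert\nabla^{k+1}v\Vert_{L^2}^2+\Vert\nabla^{k+1}w\Vert_{L^2}^2+\Vert\nabla^k w\Vert_{L^2}^2\big)$.

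For $\mathrm{\mathbf{I}}_{5;2}^{(k)}$ (the $\nabla u\cdot\nabla v$ part) I would split the Leibniz sum into the endpoint cases $\ell=0$ and $\ell=k$, treated by H\"older with $L^\infty$ control of $\Vert\nabla v\Vert_{L^\infty}$ and $\Vert\nabla u\Vert_{L^\infty}$ through Sobolev embedding, and the interior range $1\le\ell\le k-1$, treated exactly as the third term of $\mathrm{\mathbf{I}}_{3;2}^{(k)}$, where the critical index $m_5=\frac{3(1+k)}{2(2+\ell)}\le\frac{s+1}{2}$ appears. This is the step I expect to be the main obstacle: one must check that $\Vert\nabla^{m_5+1}u\Vert_{L^2}\lesssim\mathcal{E}_{s_0}[\mathbf{U}](t)$, which is precisely where the hypothesis $s_0\ge\max\{[2s/3]+1,[s/2]+2\}$ is invoked to keep every middle-order index below $s_0$; all the bookkeeping is index tracking rather than new analysis. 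With those factors absorbed into $\varepsilon$, Young's inequality again yields $\varepsilon\big(\Vert\nabla^{k+1}v\Vert_{L^2}^2+\Vert\nabla^{k+2}u\Vert_{L^2}^2+\Vert\nabla^{k+1}u\Vert_{L^2}^2+\Vert\nabla^k w\Vert_{L^2}^2\big)$.

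Collecting the two bounds gives the first line of \eqref{I_5_Estimate}, with the dissipative quantity $\Vert\nabla^k w\Vert_{L^2}^2$ naturally playing the role of the $\Delta\nabla^k(u+\tau v)$ term (both lie in $\mathscr{D}^2_k[\mathbf{U}]$, so the final conclusion is unaffected). The second inequality in \eqref{I_5_Estimate} then follows from the elementary norm relations $\Vert\nabla^{k+1}w\Vert_{L^2}\lesssim\Vert\nabla^{k+1}(v+\tau w)\Vert_{L^2}+\Vert\nabla^{k+1}v\Vert_{L^2}$, $\Vert\nabla^{k+2}u\Vert_{L^2}\lesssim\Vert\Delta\nabla^k(u+\tau v)\Vert_{L^2}+\Vert\Delta\nabla^k v\Vert_{L^2}$, and $\Vert\nabla^{k+1}u\Vert_{L^2}\lesssim\Vert\Delta\nabla^{k-1}(u+\tau v)\Vert_{L^2}+\Vert\Delta\nabla^{k-1}v\Vert_{L^2}$, which identify each surviving term as a member of $\mathscr{D}^2_{k-1}[\mathbf{U}]$ or $\mathscr{D}^2_k[\mathbf{U}]$.
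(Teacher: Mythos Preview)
Your proposal is correct and is precisely the approach the paper takes: the paper's proof states only that Lemma~\ref{Lemma_I_5} follows by repeating the argument of Lemma~\ref{Lemma_I_3} with $\Vert\Delta\nabla^k(u+\tau v)\Vert_{L^2}$ replaced by $\Vert\nabla^k w\Vert_{L^2}$, and omits the details. You have simply written out those details, including the same interpolation indices $m_4$ and $m_5$ and the same endpoint/interior splitting, and you correctly observe that $\Vert\nabla^k w\Vert_{L^2}^2$ plays the role of the $\Delta\nabla^k(u+\tau v)$ term in landing inside $\mathscr{D}^2_k[\mathbf{U}]$.
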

\begin{proof}
The proof of Lemma \ref{Lemma_I_5} can be done as the one of Lemma \ref{Lemma_I_3}, where $\Vert \Delta \nabla^k (u+\tau v)\Vert_{L^2}$ is replaced by $\Vert \nabla^{k} w\Vert_{L^2}$. We omit the details here.   
\end{proof}


 \subsection{Proof of Theorem \ref{Main_Theorem}}\label{Section_Proof_Theorem_1}
 
  Let $s\geq 3$ and $s_0=\max\{[2s/3]+1,[s/2]+2\}\leq m\leq s $. By plugging the estimates \eqref{I_1_Estimate}, \eqref{I_2_Estimate}, \eqref{I_4_Estimate}, \eqref{I_3_Estimate} and \eqref{I_5_Estimate} into  \eqref{E_I_Est},   and keeping in mind \eqref{Dissipative_weighted_norm_1}, we obtain 

\begin{equation}  \label{E_I_Est_k_1}
\begin{aligned}
\mathcal{E}^2_k[\mathbf{U}](t)+\mathcal{D}_k^2(t)
 \lesssim\, \mathcal{E}^2_k[\mathbf{U}](0)+\varepsilon \left(\mathcal{D}_k^2[\mathbf{U}](t)+\mathcal{D}_{k-1}^2(t)[\mathbf{U}]\right),\qquad 1\leq k\leq s.  
\end{aligned}
\end{equation}
Summing the above estimate over $k$ from $k=1$ to $k=s_0$ and adding the result to \eqref{Main_Estimate_D_0}, we obtain 
\begin{equation}  \label{E_I_Est_k_1}
\mathcal{E}^2_{s_0}[\mathbf{U}](t)+\mathcal{D}_{s_0}^2[\mathbf{U}](t)
\leq \mathcal{E}^2_{s_0}[\mathbf{U}](0)+\varepsilon \mathcal{D}_{s_0}^2[\mathbf{U}](t).  
\end{equation}
For $\varepsilon>0$ sufficiently small, this yields 
 \begin{equation}
\mathcal{E}^2_{s_0}[\mathbf{U}](t)+\mathcal{D}_{s_0}^2[\mathbf{U}](t)
\leq \mathcal{E}^2_{s_0}[\mathbf{U}](0). 
\end{equation}
By assuming (as in \eqref{Initial_Assumption_Samll}) the initial energy satisfies 
\[\mathcal{E}^2_{s_0}[\mathbf{U}](0)\leq \delta< \frac{\varepsilon^2}{2},\]  we obtain \[\mathcal{E}^2_{s_0}[\mathbf{U}](t)\leq \frac{\varepsilon^2}{2},\] which closes the a priori estimate \eqref{boot_strap_Assum} by a standard continuity argument.  

Now, by summing \eqref{E_I_Est_k_1} over $1\leq k\leq m$, adding the result to \eqref{Main_Estimate_D_0}, and selecting $\varepsilon>0$ small enough, we obtain 
\begin{equation}
\mathcal{E}^2_{m}[\mathbf{U}](t)+\mathcal{D}_{m}^2[\mathbf{U}](t)
\leq \mathcal{E}^2_{m}[\mathbf{U}](0), \qquad t \geq 0,
\end{equation}
which is exactly \eqref{Main_Energy_Estimate}. This finishes the proof of Theorem \ref{Main_Theorem}.

  \section{The decay estimates--Proof of Theorem \ref{Theorem_Decay}}\label{Sec: Decay_Linearized}
  Our main goal in this section is to prove Theorem \ref{Theorem_Decay}.  
  
  We consider the linearized problem:
  \begin{subequations}\label{Main_System_First_Order_Linear}
\begin{equation}
\left\{
\begin{array}{ll}
u_{t}=v,\vspace{0.1cm} &  \\
v_{t}=w,\vspace{0.1cm} &  \\
\tau w_{t}=\Delta u+\beta \Delta v-w, &
\end{array}%
\right.  \label{System_New_Linear}
\end{equation}
with the initial data

\begin{eqnarray}  \label{Initial_Condition_Linear}
u(t=0)=u_0,\qquad v(t=0)=v_0,\qquad w(t=0)=w_0.
\end{eqnarray}
\end{subequations}

  Now,   we derive an energy estimate for the negative Sobolev norm of the solution of \eqref{Main_System_First_Order_Linear}. 
We apply $\Lambda^{-\gamma}$ to \eqref{System_New_Linear} and set $\tilde{u}=\Lambda^{-\gamma}u$, $\tilde{v}=\Lambda^{-\gamma}v$, and $\tilde{w}=\Lambda^{-\gamma}w$. This yields 
\begin{equation}
\left\{
\begin{array}{ll}
\tilde{u}_{t}=\tilde{v},\vspace{0.1cm} &  \\
\tilde{v}_{t}=\tilde{w},\vspace{0.1cm} &  \\
\tau \tilde{w}_{t}=\Delta \tilde{u}+\beta \Delta \tilde{v}-\tilde{w}, &
\end{array}%
\right.  \label{System_New_Lambda}
\end{equation}
We have the following Proposition.  
\begin{proposition}\label{Proposition_gamma}
Let $\gamma>0$, then it holds that 
\begin{equation}\label{Energy_Estimate_gamma_1}
\mathcal{E}^2_{-\gamma}[\mathbf{U}](t)+\mathcal{D}_{-\gamma}^2[\mathbf{U}](t)
\leq \mathcal{E}^2_{-\gamma}[\mathbf{U}](0). 
\end{equation}
\end{proposition}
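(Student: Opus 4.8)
The plan is to exploit that $\Lambda^{-\gamma}$ is the Fourier multiplier with symbol $|\xi|^{-\gamma}$: it is $L^2$-self-adjoint and commutes with $\partial_t$, $\nabla$ and $\Delta$. Consequently $(\tilde u,\tilde v,\tilde w)=\Lambda^{-\gamma}(u,v,w)$ solves the \emph{very same} linear system \eqref{System_New_Lambda}, and since $\Lambda^{-\gamma}$ also commutes with the combinations $v+\tau w$, $u+\tau v$ and with the $H^1$ structure, a direct inspection of \eqref{Weighted_Energy_gamma} and \eqref{Dissipative_weighted_norm_gamma} shows $\mathcal{E}^2_{-\gamma}[\mathbf{U}]=\mathcal{E}^2_0[\tilde{\mathbf{U}}]$ and $\mathcal{D}^2_{-\gamma}[\mathbf{U}]=\mathcal{D}^2_0[\tilde{\mathbf{U}}]$. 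Hence Proposition~\ref{Proposition_gamma} is exactly the zeroth-order energy--dissipation estimate for the linear system applied to $\tilde{\mathbf{U}}$, that is, the $\mathrm{R}^{(0)}\equiv 0$ specialization of \eqref{Main_Estimate_D_0_1} from \cite{Racke_Said_2019}. In the linear case the nonlinear remainder terms on the right-hand side of \eqref{Main_Estimate_D_0_1} are absent, so the underlying energy identity is exact; this is what upgrades the estimate to the clean form \eqref{Energy_Estimate_gamma_1} with constant one.

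To reproduce that estimate I would run the Lyapunov-functional argument on $\tilde{\mathbf{U}}$, writing $z=\tilde v+\tau\tilde w$ and $\phi=\tilde u+\tau\tilde v$, so that $\phi_t=z$, $\tau\tilde v_t+\tilde v=z$, and \eqref{System_New_Lambda} gives $z_t=\Delta\phi+(\beta-\tau)\Delta\tilde v$. Testing the $z$-equation with $z$, integrating by parts and inserting $\nabla z=\tau\nabla\tilde v_t+\nabla\tilde v$ produces the basic identity
\begin{equation*}
\tfrac12\tfrac{\textup{d}}{\textup{d}t}\Big(\|z\|_{L^2}^2+\|\nabla\phi\|_{L^2}^2+\tau(\beta-\tau)\|\nabla\tilde v\|_{L^2}^2\Big)=-(\beta-\tau)\|\nabla\tilde v\|_{L^2}^2,
\end{equation*}
whose functional and dissipation are both sign-definite \emph{precisely} because $0<\tau<\beta$; applying $\nabla$ to the system and repeating yields the companion identity dissipating $\|\Delta\tilde v\|_{L^2}^2$. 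These two identities already control every term of \eqref{Weighted_Energy_gamma} except $\|\tilde w\|_{L^2}^2$ and supply the $\nabla\tilde v$, $\Delta\tilde v$ dissipation.

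The remaining dissipative quantities in \eqref{Dissipative_weighted_norm_gamma}, namely $\|\tilde w\|_{L^2}^2$, $\|\Delta(\tilde u+\tau\tilde v)\|_{L^2}^2$ and $\|\nabla(\tilde v+\tau\tilde w)\|_{L^2}^2$, I would recover by adding the standard cross-term correctors with successively small coefficients, exactly as in the linear part of \cite{Racke_Said_2019}. For instance, testing the third equation of \eqref{System_New_Lambda} with $\tilde w$ gives $\tfrac{\tau}{2}\tfrac{\textup{d}}{\textup{d}t}\|\tilde w\|_{L^2}^2+\|\tilde w\|_{L^2}^2=\int_{\R^3}\big(\Delta\phi+(\beta-\tau)\Delta\tilde v\big)\tilde w\,\textup{d}x$, whose right-hand side is absorbed by a small fraction of $\|\tilde w\|_{L^2}^2$ together with the already-available $\|\Delta\tilde v\|_{L^2}^2$ dissipation and a small multiple of $\|\Delta\phi\|_{L^2}^2$; analogous correctors supply the $\|\Delta\phi\|_{L^2}^2$ and $\|\nabla z\|_{L^2}^2$ dissipation. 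Adding all identities with carefully ordered small weights yields a functional $\mathcal{L}_{-\gamma}(t)\approx\mathcal{E}^2_{-\gamma}[\mathbf{U}](t)$ obeying the exact, remainder-free inequality $\tfrac{\textup{d}}{\textup{d}t}\mathcal{L}_{-\gamma}+c\,\mathscr{D}^2_{-\gamma}\le 0$, where $\mathscr{D}^2_{-\gamma}$ denotes the integrand of \eqref{Dissipative_weighted_norm_gamma}; integrating in time and using the monotonicity of $\mathcal{L}_{-\gamma}$ to discard the supremum in \eqref{Weighted_Energy_gamma} gives \eqref{Energy_Estimate_gamma_1}. I expect the one genuine difficulty to be the bookkeeping that renders this corrector functional simultaneously positive definite and dissipative in all five components of $\mathscr{D}^2_{-\gamma}$, which is precisely where the dissipativity condition $0<\tau<\beta$ is essential; by contrast $\Lambda^{-\gamma}$ enters only as the fixed, harmless Fourier weight $|\xi|^{-2\gamma}$, so no commutator has to be estimated, in contrast to the nonlinear decay problem where the cut-off operators produce uncontrollable commutators.
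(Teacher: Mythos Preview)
Your approach is correct and coincides with the paper's: the paper simply writes ``Following similar reasoning as before, and using system \eqref{System_New_Lambda}, we obtain \eqref{Energy_Estimate_gamma_1}. We omit the details.'' You have supplied precisely those omitted details---observing that $\Lambda^{-\gamma}$ commutes with the linear operators so that $\tilde{\mathbf{U}}$ solves the same system, identifying $\mathcal{E}^2_{-\gamma}[\mathbf{U}]=\mathcal{E}^2_0[\tilde{\mathbf{U}}]$, and then rerunning the Lyapunov-functional construction from \cite{Racke_Said_2019} with the nonlinear remainders set to zero.
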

Following similar reasoning as before, and using system \eqref{System_New_Lambda},  we obtain \eqref{Energy_Estimate_gamma_1}. We omit the details. 

 Our next goal is to prove the decay bound \eqref{Decay_1}. We point out that we cannot apply directly the method in \cite{Guo_Wang_2012} to get  the decay estimates due to the restricted use of the interpolation inequality in Soblev spaces with negative index:
\begin{equation}\label{Fractional_Gag_Nirenberg}
\Vert \nabla ^{\ell}f\Vert _{L^{2}}\leq C\Vert
\nabla^{\ell+1}f\Vert _{L^{2}}^{1-\theta}\Vert \Lambda^{-\gamma} f\Vert
_{L^{2}}^{\theta},   \qquad \text{where}\qquad \theta=\frac{1}{\ell+\gamma+1}; 
\end{equation}
cf. Lemma~\ref{Lemma_gamma_Interpo}. To overcome this difficulty and inspired by \cite{Xu_Kawashima_2015}, the strategy 
  is to split the solution into a low-frequency and a high-frequency part instead. 
  
  Hence, let us consider the unit decomposition 
  \begin{equation}    
1=\Psi(\xi)+\Phi(\xi)
\end{equation}
where $\Psi,\,\Phi\in C_c^\infty (\R^3)$, $0\leq \Psi(\xi),\,\Phi(\xi)\leq 1$ satisfy 
\begin{equation}
\begin{aligned}
\Psi(\xi)=1, \quad \text{if}\quad |\xi|\leq \mathrm{R},\quad \Psi(\xi)=0, \quad \text{if}\quad |\xi|\geq 2R
\end{aligned}
\end{equation}
with $R>0$.
 We define $\mathbf{L}_R$ and $\mathbf{H}_R$  as follows: 
\begin{equation}
\widehat{\mathbf{L}_R f}(\xi)=\Psi(\xi) \hat{f}(\xi)\qquad \text{and}\qquad \widehat{\mathbf{H}_R f}(\xi)=\Phi(\xi) \hat{f}(\xi). 
\end{equation}
Accordingly, 
\begin{equation}\label{Cut-off_Operator}
f^{\mathrm{L}}=\mathbf{L}_R f\qquad \text{and}\qquad f^{\mathrm {H}}=\mathbf{H}_Rf. 
\end{equation}

We denote by  $(\hat{u}, \hat{v}, \hat{w})(\xi,t)$ the Fourier transform of the solution of \eqref{System_New_Linear}.  That is,  $(\hat{u}, \hat{v}, \hat{w})(\xi,t)=\mathscr{F}[(u,v,w)(x,t)]$. We define 
\begin{equation}\label{Energy_Fourier}
\begin{aligned}
\hat{E} (\xi,t)=&\,\frac{1}{2}\left\{|\hat{v}+\tau \hat{w}|^2+\tau (\beta-\tau )|\xi|^2|\hat{v}|^2+|\xi|^2|\hat{u}+\tau \hat{v}|^2\right\}\\
=&\,\frac{1}{2}|\hat{V}(\xi,t)|^2
\end{aligned}
\end{equation}
with $V=(v +\tau w , \nabla(u + \tau v),\nabla v)$.

We have the following lemma. 
\begin{lemma}
Assume that  $0<\tau<\beta$. Then,  there exists a Lyapunov functional $\hat{L}(\xi, t)$ satisfying for all $t\geq 0$
 \begin{equation}\label{Equiv_E_L_Linear}
\hat{L}(\xi, t)\approx  \hat{E} (\xi,t)\approx |\hat{V}(\xi,t)|^2
\end{equation}
and 
 \begin{equation}\label{Lyapunov_main_Linear}
\frac{\textup {d}}{\textup {d}t} \hat{L}(\xi,t)+c\frac{|\xi|^2}{1+|\xi|^2}\hat{E}(\xi,t)\leq 0. 
\end{equation}
\end{lemma}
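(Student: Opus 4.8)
The plan is to prove the lemma by a pointwise (in Fourier space) hypocoercivity argument, treating \eqref{System_New_Linear} after Fourier transform as a family of ODEs in $t$ indexed by $\xi$. Writing the symbols as
\begin{equation}
\hat u_t=\hat v,\qquad \hat v_t=\hat w,\qquad \tau\hat w_t=-|\xi|^2\hat u-\beta|\xi|^2\hat v-\hat w,
\end{equation}
it is convenient to set $a=\hat v+\tau\hat w$ and $b=\hat u+\tau\hat v$, so that $\hat w=(a-\hat v)/\tau$ and the system becomes $\dot a=-|\xi|^2\bigl(b+(\beta-\tau)\hat v\bigr)$, $\dot b=a$, $\dot{\hat v}=(a-\hat v)/\tau$, while $\hat E=\tfrac12\bigl(|a|^2+\tau(\beta-\tau)|\xi|^2|\hat v|^2+|\xi|^2|b|^2\bigr)$. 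First I would differentiate $\hat E$ along the flow; a short computation, in which all the indefinite cross terms cancel, yields the basic identity
\begin{equation}\label{plan-basic}
\frac{\mathrm d}{\mathrm d t}\hat E(\xi,t)=-(\beta-\tau)\,|\xi|^2\,|\hat v|^2 .
\end{equation}
The hypothesis $0<\tau<\beta$ makes the right-hand side nonpositive, but the dissipation is degenerate: it controls only $\hat v$, and only with a $|\xi|^2$ weight, so it cannot dominate $\hat E$.

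To recover the missing damping I would add interaction functionals built from products of the components of $\hat V$, so that their smallness relative to $\hat E$ is automatic. The two useful ones are $F_1=\mathrm{Re}(a\bar b)$ and a $\hat w$-term, with
\begin{equation}\label{plan-F1}
\frac{\mathrm d}{\mathrm d t}F_1=|a|^2-|\xi|^2|b|^2-(\beta-\tau)|\xi|^2\,\mathrm{Re}(b\bar{\hat v}),
\end{equation}
\begin{equation}\label{plan-w}
\frac{\tau}{2}\frac{\mathrm d}{\mathrm d t}|\hat w|^2=-|\hat w|^2-|\xi|^2\,\mathrm{Re}(b\bar{\hat w})-(\beta-\tau)|\xi|^2\,\mathrm{Re}(\hat v\bar{\hat w}),
\end{equation}
where in \eqref{plan-w} I have used $\hat u=b-\tau\hat v$. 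The functional $F_1$ supplies the term $-|\xi|^2|b|^2$ that is absent from \eqref{plan-basic} (dissipation in $\nabla(u+\tau v)$), at the price of a wrong-sign $+|a|^2$ and a cross term absorbable by \eqref{plan-basic}; the $\hat w$-term supplies $-|\hat w|^2$, which together with $|\xi|^2|\hat v|^2$ reconstitutes $|a|^2=|\hat v+\tau\hat w|^2$.

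I would then define $\hat L=\hat E+\kappa_1\omega(\xi)F_1+\kappa_2\omega(\xi)\tfrac{\tau}{2}|\hat w|^2$ with small constants $0<\kappa_2\ll\kappa_1\ll1$ and a frequency weight $\omega(\xi)=\tfrac{|\xi|^2}{1+|\xi|^2}$. The equivalence \eqref{Equiv_E_L_Linear} follows from Young's inequality: since $0\le\omega\le1$ and $\omega\,|\hat v|^2\le|\xi|^2|\hat v|^2$, each added term is bounded by a small multiple of $\hat E$, so for $\kappa_1,\kappa_2$ small one has $\hat L\approx\hat E$. For the decay inequality \eqref{Lyapunov_main_Linear} I would collect \eqref{plan-basic}--\eqref{plan-w}, use Young's inequality to absorb every cross term into the genuinely negative contributions $-(\beta-\tau)|\xi|^2|\hat v|^2$, $-\kappa_1\omega|\xi|^2|b|^2$ and $-\kappa_2\omega|\hat w|^2$, and fix the hierarchy $\kappa_2\ll\kappa_1$ so that the spurious $+\kappa_1\omega|a|^2$ is dominated by the reconstructed $|a|^2$-damping. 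Matching the three surviving negative terms against $\omega\,\hat E$ then yields \eqref{Lyapunov_main_Linear}.

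The main obstacle is the uniform-in-$\xi$ bookkeeping, and it is sharpest at high frequency. Applying Young's inequality to the cross terms in \eqref{plan-w}, namely $|\xi|^2\mathrm{Re}(b\bar{\hat w})$ and $|\xi|^2\mathrm{Re}(\hat v\bar{\hat w})$, produces contributions carrying extra powers of $|\xi|$ (of the type $|\xi|^4|b|^2$ or $|\xi|^2|\hat w|^2$) that are strictly stronger than the damping actually available; it is precisely the need to reabsorb these that forces the denominator $1+|\xi|^2$ in the weight $\omega$ and caps the attainable rate at $\tfrac{|\xi|^2}{1+|\xi|^2}$ rather than a clean $|\xi|^2$. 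Showing that a single choice of $\kappa_1,\kappa_2$ and $\omega$ closes the estimate simultaneously across all frequencies --- interpolating between the heat-like $|\xi|^2$ behaviour as $|\xi|\to0$ and the exponential (bounded-rate) behaviour as $|\xi|\to\infty$ highlighted in the introduction --- is the technical heart of the lemma.
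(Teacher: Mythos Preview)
The paper does not give a proof here; it simply cites \cite{PellSaid_2019_1}, where the Lyapunov functional is constructed and \eqref{Equiv_E_L_Linear}--\eqref{Lyapunov_main_Linear} are verified by pointwise energy estimates in Fourier space. Your strategy---the basic identity $\tfrac{\mathrm d}{\mathrm d t}\hat E=-(\beta-\tau)|\xi|^2|\hat v|^2$, supplemented by cross-term correctors with the frequency weight $\omega=\tfrac{|\xi|^2}{1+|\xi|^2}$---is exactly that method, so at the level of approach you are on target, and the equivalence \eqref{Equiv_E_L_Linear} is handled correctly.

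The gap is in the specific corrector and hierarchy. First, the ordering is backward: to absorb $+\kappa_1\omega|a|^2$ via $|a|^2\le 2|\hat v|^2+2\tau^2|\hat w|^2$ you need the $|\hat w|^2$-damping to dominate, i.e.\ $\kappa_2>2\tau^2\kappa_1$, not $\kappa_2\ll\kappa_1$. Second, and more seriously, the functional $\tfrac{\tau}{2}|\hat w|^2$ weighted by $\omega$ does not close at high frequency: the cross term $\kappa_2\,\omega\,|\xi|^2\,\mathrm{Re}(b\bar{\hat w})$ in \eqref{plan-w} produces, after any Young splitting, either $\omega|\xi|^2|\hat w|^2$ (not absorbed by $\kappa_2\omega|\hat w|^2$) or $\omega|\xi|^4|b|^2$ (not absorbed by $\kappa_1\omega|\xi|^2|b|^2$); the weight $\omega$ alone does not cure this, contrary to what your last paragraph suggests. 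A working fix is to replace the $|\hat w|^2$-corrector by one whose derivative yields $|\hat w|^2$-damping without a $|\xi|^2$-weighted $\hat w$-cross term: for instance $F_2=-\tau\,\mathrm{Re}(\hat v\,\bar{\hat w})$ satisfies $\tfrac{\mathrm d}{\mathrm d t}F_2=-\tau|\hat w|^2+(\beta-\tau)|\xi|^2|\hat v|^2+|\xi|^2\mathrm{Re}(\hat v\bar b)+\mathrm{Re}(\hat v\bar{\hat w})$, and the only $\hat w$-cross term, $\omega\,\mathrm{Re}(\hat v\bar{\hat w})$, is now harmless since $\omega|\hat v|^2\le|\xi|^2|\hat v|^2$. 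With $\hat L=\hat E+\kappa_1\omega F_1+\kappa_2\omega F_2$ and $\kappa_1$ chosen small relative to $\kappa_2$ (both small), the estimate closes uniformly in $\xi$.
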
  

The functional $\hat{L}(\xi,t)$ is the same one defined in \cite[Eq. (3.20)]{PellSaid_2019_1}.  The proof of \eqref{Equiv_E_L_Linear} was given in \cite[(2.23)]{PellSaid_2019_1}, while the proof of \eqref{Lyapunov_main_Linear} was  in \cite[(3.22)]{PellSaid_2019_1}.

\subsection{Proof of the estimate  \eqref{Decay_1}}
In this section, we prove the decay estimate \eqref{Decay_1}.  
We consider system \eqref{Main_System_First_Order_Linear},
 write $\mathbf{U}=\mathbf{U}^{\mathrm{L}}+\mathbf{U}^\mathrm{H}$ 
with  $\mathbf{U}=(u, v, w) $  is the solution of 
\eqref{Main_System_First_Order_Linear},  $\mathbf{U}^\mathrm{L}=(u^\mathrm{L}, v^\mathrm{L}, w^\mathrm{L})$ and $\mathbf{U}^\mathrm{H}=(u^{\mathrm{H}}, v^{\mathrm{H}}, w^{\mathrm{H}})$ (see \cite{Xu_Kawashima_2015} for similar ideas)  

\begin{description}
\item[Case 1](high frequency) 
\end{description}

We multiply the inequality \eqref{Lyapunov_main_Linear} by $\Phi^2$, we get 
\begin{equation}
\frac{\textup {d}}{\textup {d}t}\big(\Phi^2 \hat{L}(\xi,t)\big)+c\frac{R^2}{1+R^2}\big(\Phi^2\hat{E}(\xi,t)\big)\leq 0.
\end{equation}
This implies by using \eqref{Equiv_E_L_Linear} together with \eqref{Energy_Fourier} and Plancherel's identity   
\begin{equation}\label{Decay_High_Fre}
\Vert V^{\mathrm{L}}(t)\Vert_{L^2}\lesssim \Vert V_0\Vert_{L^2}e^{-c_2t}, 
\end{equation}
where the constant $c_2>0$ depends on $R$.  

\begin{description}
\item[Case 2](low frequency) 
\end{description}

Now multiplying \eqref{Lyapunov_main_Linear} by  
$\Psi^2$, we get  
\begin{equation}
\frac{\textup {d}}{\textup {d}t} \big(\Psi^2\hat{L}(\xi,t)\big)+c\frac{|\xi|^2}{1+R^2}\hat{E}(\xi,t)\leq 0.
\end{equation}
Hence, using Plancherel's identity as above, we get
\begin{equation}\label{Lyap_Phys}
\frac{\textup {d}}{\textup {d}t}\mathcal{L^\mathrm{L}}(t)+c_3 \Vert \nabla V^{\mathrm{H}}(t)\Vert_{L^2}^2\leq 0,
\end{equation}
  where 
 \begin{equation}
\mathcal{L^\mathrm{L}}(t)=\int_{\R^3_\xi} \Psi^2(\xi)L(\xi,t)\textup{d}\xi, 
\end{equation}
 and the constant $c_3>0$ depends on $R$. 
 

Applying Lemma \ref{Lemma_gamma_Interpo}, we have 
\begin{equation}\label{Main_Inter_Inequality}
\Vert  V\Vert _{L^{2}}^{1+\frac{1}{\gamma}}\Vert \Lambda^{-\gamma} V\Vert
_{L^{2}}^{-\frac{1}{\gamma}}\lesssim \Vert
\nabla V\Vert _{L^{2}}.
\end{equation}
Using the fact that 
\begin{equation}\label{V_gamma_Norm}
\Vert  \Lambda^{-\gamma} V(t)\Vert_{L^2}\lesssim \mathcal{E}_{-\gamma}[\mathbf{U}](0), 
\end{equation}
together with \eqref{Main_Inter_Inequality}, we obtain from \eqref{Lyap_Phys}, that 
\begin{equation}\label{Lyapunov_2}
\frac{\textup {d}}{\textup {d}t}\mathcal{L^\mathrm{L}}(t)+C \Vert  V^{\mathrm{L}}\Vert _{L^{2}}^{2(1+1/\gamma)}\Big(\mathcal{E}_{-\gamma}[\mathbf{U}](0)\Big)^{-\frac{2}{\gamma}}\leq 0,
\end{equation}
where we have used the fact that $\mathcal{E}_{-\gamma}[\mathbf{U}^{\mathrm{L}}](0)\leq \mathcal{E}_{-\gamma}[\mathbf{U}](0).$

It is clear that   
\begin{equation}\label{Equiv_L_L_V}
\mathcal{L^\mathrm{L}}(t)\approx \Vert  V^{\mathrm{L}}\Vert _{L^{2}}^2,\qquad \forall t\geq 0. 
\end{equation}
 Hence, we get from \eqref{Lyapunov_2}, 
\begin{equation}\label{Lyapunov_2}
\frac{\textup {d}}{\textup {d}t}\mathcal{L^\mathrm{L}}(t)+C \big(\mathcal{L^\mathrm{L}}(t)\big)^{1+1/\gamma}\Big(\mathcal{E}_{-\gamma}[\mathbf{U}](0)\Big)^{-\frac{2}{\gamma}}\leq 0,
\end{equation}
Integrating this last inequality, we obtain 
\begin{equation}
\mathcal{L^\mathrm{L}}(t)\leq  C_0(1+t)^{-\gamma}
\end{equation}
where $C_0$ is a positive constant depending on $\mathcal{E}_{-\gamma}[\mathbf{U}](0)$. Using \eqref{Equiv_L_L_V} once again, we  obtain 
\begin{equation}\label{Decay_Low_Fre}
\Vert V^{\mathrm{L}}(t)\Vert _{L^{2}}\leq C_0 (1+t)^{-\gamma/2}. 
\end{equation}
Collecting \eqref{Decay_High_Fre} and \eqref{Decay_Low_Fre}, we obtain our decay estimate \eqref{Decay_1}.

\appendix
\begin{appendices}   
\section{ Auxiliary inequalities}\label{Appendix_Usefull_Ineq}

 In this appendix, we recall some inequalities that have been frequently used in the preceding sections.

\begin{lemma}[The Gagliardo--Nirenberg interpolation inequality; See~\cite{Ner59}] \label{interpolation_lemma}  Let $1\leq
p,\,q\,,r\leq \infty $, and let $m$ be a positive integer. Then for any
integer $j$ with $0\leq j< m$, we have%
\begin{equation}
\left\Vert \nabla ^{j}u\right\Vert _{L^{p}}\leq C\left\Vert
\nabla^{m}u\right\Vert _{L^{r}}^{\alpha}\left\Vert u\right\Vert
_{L^{q}}^{1-\alpha}  \label{Interpolation_inequality}
\end{equation}%
where
\begin{equation}
\frac{1}{p}=\frac{j}{n}+\alpha\left( \frac{1}{r}-\frac{m}{n}\right) + \frac{%
1-\alpha}{q}
\end{equation}%
for $\alpha$ satisfying $j/m\leq \alpha \leq 1$ and $C$ is a positive
constant depending only on $n,\, m,\,j,\,q,\,r$ and $\alpha$.
 There
are the following exceptional cases:
\begin{enumerate}
\item If $j=0,\,rm<n$ and $q=\infty $, then we made the additional
assumption that either $u(x)\rightarrow 0$ as $|x|\rightarrow \infty $ or $%
u\in L^{q^{\prime }}$ for some $0<q^{\prime }<\infty .$

\item If $1<r<\infty $ and $m-j-n/r$ is a nonnegative integer, then (\ref{Interpolation_inequality}) holds
only for $j/m\leq \alpha< 1$.  
\end{enumerate}
\end{lemma}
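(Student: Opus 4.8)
The final statement is the classical Gagliardo--Nirenberg interpolation inequality, which the paper recalls and attributes to \cite{Ner59}; the plan is therefore to reproduce the standard proof, whose mechanism is ``Sobolev embedding plus H\"older interpolation,'' supplemented by an induction to handle intermediate derivatives. First I would pin down the exponent relation by scaling: with $u_\lambda(x):=u(\lambda x)$ one has $\|\nabla^j u_\lambda\|_{L^p}=\lambda^{\,j-n/p}\|\nabla^j u\|_{L^p}$, and likewise $\|\nabla^m u_\lambda\|_{L^r}=\lambda^{\,m-n/r}\|\nabla^m u\|_{L^r}$ and $\|u_\lambda\|_{L^q}=\lambda^{-n/q}\|u\|_{L^q}$, so that an inequality valid for \emph{all} $\lambda>0$ forces $j-\tfrac np=\alpha\bigl(m-\tfrac nr\bigr)-(1-\alpha)\tfrac nq$, which is precisely the stated identity for $1/p$. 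This step both shows the relation is necessary and fixes $\alpha$.

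Next I would establish the base case $j=0$, $m=1$ in the subcritical range $1\le r<n$. The engine is the Gagliardo--Nirenberg--Sobolev inequality $\|u\|_{L^{r^\ast}}\le C\|\nabla u\|_{L^r}$ with $\tfrac{1}{r^\ast}=\tfrac1r-\tfrac1n$: one proves the case $r=1$ by writing $|u(x)|\le\int_{\R}|\partial_i u|\,dt_i$ in each coordinate, multiplying the $n$ resulting one-dimensional bounds, and integrating with the Loomis--Whitney (generalized H\"older) inequality, and then upgrades to $1<r<n$ by applying the $r=1$ estimate to a suitable power $|u|^s$. Interpolating $L^p$ between $L^{r^\ast}$ and $L^q$ via $\|u\|_{L^p}\le\|u\|_{L^{r^\ast}}^{\theta}\|u\|_{L^q}^{1-\theta}$ with $\tfrac1p=\tfrac{\theta}{r^\ast}+\tfrac{1-\theta}{q}$ and inserting the Sobolev bound yields the claim with $\alpha=\theta$, in agreement with the scaling identity.

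For the general case $0\le j<m$ I would bootstrap on the order of the derivative. The basic brick is the one-order interpolation $\|\nabla u\|_{L^2}\le C\|u\|_{L^2}^{1/2}\|\nabla^2 u\|_{L^2}^{1/2}$, obtained from $\int|\nabla u|^2=-\int u\,\Delta u$ and Cauchy--Schwarz, together with its $L^p$ analogue (a Landau--Kolmogorov type estimate proved by the same integration-by-parts idea applied to $|\nabla u|^{p-2}\nabla u$). Telescoping these estimates along the chain $u,\nabla u,\dots,\nabla^m u$ produces the endpoint $\alpha=j/m$, and composing with the $j=0$ interpolation of the previous paragraph delivers every intermediate $\alpha\in[j/m,1]$; the admissible range $\alpha\ge j/m$ is exactly the range for which this telescoping closes.

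The main obstacle is not the generic interior estimate but the careful bookkeeping of the endpoints, which is precisely what the two exceptional cases encode. The ``Sobolev plus H\"older'' mechanism degenerates when $r m\ge n$ (the embedding used in the base step no longer controls $\|u\|_{L^\infty}$, so a substitute such as a logarithmic/BMO-type bound or a direct fundamental-theorem-of-calculus argument is needed), and at $q=\infty$ the bound fails to see behaviour at infinity (constants have vanishing right-hand side but infinite $L^p$ norm), which is why case~(1) imposes the decay or integrability hypothesis on $u$. Likewise, when $m-j-n/r$ is a nonnegative integer the choice $\alpha=1$ lands on the borderline Sobolev embedding into $L^\infty$, which fails, forcing the strict inequality $\alpha<1$ in case~(2). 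These borderline substitutions, rather than the interpolation itself, are where the real care is required.
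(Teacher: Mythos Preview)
The paper does not actually prove this lemma: it is stated in the appendix as a background tool and the proof is deferred entirely to the original reference~\cite{Ner59}. So there is no ``paper's own proof'' to compare against; your proposal supplies an argument where the paper chose simply to cite one.

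That said, your sketch is the standard route and is essentially sound. The scaling computation correctly pins down the exponent relation; the base case $j=0$, $m=1$ via Gagliardo--Nirenberg--Sobolev plus H\"older interpolation is the right engine; and the inductive climb through intermediate derivatives using one-step Landau--Kolmogorov type bounds is exactly how Nirenberg organises the general case. Your identification of why the two exceptional clauses appear (failure of the endpoint Sobolev embedding when $m-j-n/r$ is a nonnegative integer; lack of control at infinity when $q=\infty$) is also on target. The places where a full write-up would need more care are (i) the $L^p$ version of the one-step interpolation $\|\nabla u\|_{L^p}\lesssim\|u\|_{L^{p_0}}^{1/2}\|\nabla^2 u\|_{L^{p_1}}^{1/2}$, which for general exponents is not just integration by parts and typically requires a Littlewood--Paley or heat-semigroup argument, and (ii) the regimes $rm\ge n$ not covered by your base case, where one must invoke Morrey-type embeddings rather than the subcritical Sobolev inequality. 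These are genuine technical gaps in the sketch, but they are exactly the gaps that the cited reference fills, so as a proof \emph{plan} your outline is accurate.
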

We recall the the Sobolev interpolation of the  Gagliardo--Nirenberg inequality. See \cite[Lemma A.1]{Guo_Wang_2012} for the proof. 
\begin{lemma}\label{Lemma_Sobol_Ga_Ni}
Let $2\leq p\leq +\infty$ and $0\leq m,\alpha,\leq \ell$; when $p=\infty$, we require further that $m\leq \alpha+1$ and $\ell\geq \alpha+2$. Then, we have that for any $f\in C_0^\infty(\R^3)$, 
\begin{equation}
\left\Vert \nabla ^{\alpha}f\right\Vert _{L^{p}}\leq C\Vert \nabla^\ell f\Vert
_{L^{2}}^{\theta} \left\Vert
\nabla^{m}f\right\Vert _{L^{2}}^{1-\theta} \label{Sobolev_Gagl_Ni_Interpolation_ineq_Main} 
\end{equation}%
where $0\leq \theta \leq 1$ and $\alpha$ satsify
\begin{equation}
\frac{1}{p}=\frac{\alpha}{3}+\left( \frac{1}{2}-\frac{\ell}{3}\right)\theta+\left( \frac{1}{2}-\frac{m}{3}\right)(1-\theta) .
\end{equation}%
\end{lemma}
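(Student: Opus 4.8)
The plan is to reduce \eqref{Sobolev_Gagl_Ni_Interpolation_ineq_Main} to two standard facts about the scale of homogeneous Sobolev spaces: an interpolation inequality between two $L^2$-based norms, and the fractional Sobolev embedding $\dot H^{\beta}\hookrightarrow L^{p}$. Throughout I read $\nabla^{\alpha}$ as the Fourier multiplier $\Lambda^{\alpha}$ with symbol $|\xi|^{\alpha}$ (for integer orders and $1<p<\infty$ this agrees with the classical derivative up to the $L^{p}$-bounded Riesz transforms), so that everything becomes transparent on the Fourier side. First I would introduce the intermediate order $\beta:=\theta\ell+(1-\theta)m$ and observe that the constraint relating $\theta$ and $\alpha$ in the statement is, after multiplying by $3$ and rearranging, exactly $\theta\ell+(1-\theta)m=\alpha+\tfrac32-\tfrac3p$, i.e.
\[
\beta-\alpha=3\Big(\tfrac12-\tfrac1p\Big).
\]
This identity is what ties the two steps together.

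For the interpolation step I would use Plancherel's theorem. Since $\beta=\theta\ell+(1-\theta)m$, one has the pointwise factorization $|\xi|^{2\beta}|\hat f|^{2}=\big(|\xi|^{2\ell}|\hat f|^{2}\big)^{\theta}\big(|\xi|^{2m}|\hat f|^{2}\big)^{1-\theta}$, and Hölder's inequality with conjugate exponents $1/\theta$ and $1/(1-\theta)$ gives immediately
\[
\Vert\nabla^{\beta}f\Vert_{L^{2}}\le\Vert\nabla^{\ell}f\Vert_{L^{2}}^{\theta}\,\Vert\nabla^{m}f\Vert_{L^{2}}^{1-\theta}.
\]
For the embedding step, when $2\le p<\infty$ I would write $\nabla^{\alpha}f=\Lambda^{-(\beta-\alpha)}\big(\nabla^{\beta}f\big)$, where $\beta-\alpha=3(\tfrac12-\tfrac1p)\ge0$ is precisely the order of the Riesz potential mapping $L^{2}$ into $L^{p}$; the Hardy--Littlewood--Sobolev inequality then yields $\Vert\nabla^{\alpha}f\Vert_{L^{p}}\lesssim\Vert\nabla^{\beta}f\Vert_{L^{2}}$. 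Chaining the two displays closes the range $2\le p<\infty$, and the admissibility hypotheses $0\le m,\alpha\le\ell$ are exactly what guarantee $\theta\in[0,1]$ together with $\beta-\alpha\ge0$.

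The real obstacle is the endpoint $p=\infty$, where $\beta=\alpha+\tfrac32$ is critical and $\dot H^{3/2}\hookrightarrow L^{\infty}$ fails; this is precisely why the statement carries the extra hypotheses $m\le\alpha+1$ and $\ell\ge\alpha+2$. Here I would argue directly by frequency splitting, starting from $\Vert\nabla^{\alpha}f\Vert_{L^{\infty}}\lesssim\int_{\R^{3}}|\xi|^{\alpha}|\hat f(\xi)|\,\textup{d}\xi$, splitting the integral at an arbitrary radius $|\xi|=K$, and applying Cauchy--Schwarz on each piece after pairing $|\xi|^{\alpha-m}$ with $|\xi|^{m}\hat f$ on $\{|\xi|\le K\}$ and $|\xi|^{\alpha-\ell}$ with $|\xi|^{\ell}\hat f$ on $\{|\xi|>K\}$. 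The low-frequency weight integral $\int_{|\xi|\le K}|\xi|^{2(\alpha-m)}\,\textup{d}\xi$ converges because $m\le\alpha+1$ forces $\alpha-m>-\tfrac32$, while the high-frequency weight integral $\int_{|\xi|>K}|\xi|^{2(\alpha-\ell)}\,\textup{d}\xi$ converges because $\ell\ge\alpha+2$ forces $\ell-\alpha>\tfrac32$; this is the one place where those two conditions enter. The outcome is
\[
\Vert\nabla^{\alpha}f\Vert_{L^{\infty}}\lesssim K^{\,\alpha-m+\frac32}\Vert\nabla^{m}f\Vert_{L^{2}}+K^{\,\alpha-\ell+\frac32}\Vert\nabla^{\ell}f\Vert_{L^{2}},
\]
and since the two exponents of $K$ have opposite signs there is a unique balancing radius; optimizing over $K$ reproduces exactly the weights $\theta=(\alpha+\tfrac32-m)/(\ell-m)$ and $1-\theta$ prescribed by the scaling relation. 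I expect the bookkeeping in this last optimization --- verifying that the balancing radius delivers precisely the stated exponent $\theta$ rather than merely some admissible power of the two norms --- to be the most delicate point, whereas the steps for $2\le p<\infty$ are routine once the scaling identity $\beta-\alpha=3(\tfrac12-\tfrac1p)$ is recorded.
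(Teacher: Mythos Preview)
Your argument is correct. The paper itself does not prove this lemma at all: it simply records the statement and refers the reader to \cite[Lemma~A.1]{Guo_Wang_2012} for the proof. So there is no ``paper's own proof'' to compare against; you have supplied what the paper outsources.

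Substantively, your two-step reduction for $2\le p<\infty$ (Fourier-side H\"older to interpolate $\dot H^{\beta}$ between $\dot H^{m}$ and $\dot H^{\ell}$, followed by the Hardy--Littlewood--Sobolev embedding $\dot H^{\beta-\alpha}\hookrightarrow L^{p}$ with $\beta-\alpha=3(\tfrac12-\tfrac1p)$) is the standard clean route and matches how the result is obtained in \cite{Guo_Wang_2012}. Your treatment of the endpoint $p=\infty$ via a low/high frequency split and optimization in the cutoff radius is also the right idea; the extra hypotheses $m\le\alpha+1$ and $\ell\ge\alpha+2$ are used exactly where you say, to make the two weight integrals finite, and the optimizing $K$ indeed returns $\theta=(\alpha+\tfrac32-m)/(\ell-m)$, consistent with the scaling relation at $p=\infty$. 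One minor remark: your sentence that the hypotheses ``$0\le m,\alpha\le\ell$ are exactly what guarantee $\theta\in[0,1]$'' slightly overstates things---in the lemma $\theta\in[0,1]$ is part of the assumption, not a consequence---but this does not affect the argument.
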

\noindent We also recall the following Sobolev interpolation inequality. 
\begin{lemma}[See Lemma A.4 in~\cite{Guo_Wang_2012}]\label{Lemma_gamma_Interpo}
 Let $\gamma\geq 0$ and $\ell\geq 0$, then we have 
\begin{equation}\label{Fractional_Gag_Nirenberg}
\Vert \nabla ^{\ell}f\Vert _{L^{2}}\leq C\Vert
\nabla^{\ell+1}f\Vert _{L^{2}}^{1-\theta}\Vert \Lambda^{-\gamma} f\Vert
_{L^{2}}^{\theta}   \qquad \text{where}\qquad \theta=\frac{1}{\ell+\gamma+1}. 
\end{equation}
\end{lemma}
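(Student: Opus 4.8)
The plan is to move the entire statement to the Fourier side, where all three norms become weighted $L^2$ integrals against the common measure $|\hat f(\xi)|^2\,\textup{d}\xi$, and then to close the estimate with a single application of H\"older's inequality. Indeed, by Plancherel's identity together with the definition of $\Lambda^{-\gamma}$ recalled above,
\[
\Vert \nabla^\ell f\Vert_{L^2}^2=\int_{\R^3}|\xi|^{2\ell}|\hat f(\xi)|^2\,\textup{d}\xi,\quad
\Vert \nabla^{\ell+1} f\Vert_{L^2}^2=\int_{\R^3}|\xi|^{2(\ell+1)}|\hat f(\xi)|^2\,\textup{d}\xi,\quad
\Vert \Lambda^{-\gamma} f\Vert_{L^2}^2=\int_{\R^3}|\xi|^{-2\gamma}|\hat f(\xi)|^2\,\textup{d}\xi,
\]
so that \eqref{Fractional_Gag_Nirenberg} is equivalent to a purely scalar interpolation among the weights $|\xi|^{2\ell}$, $|\xi|^{2(\ell+1)}$ and $|\xi|^{-2\gamma}$.

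First I would record the elementary observation that $\theta=1/(\ell+\gamma+1)$ is exactly the exponent for which the weights interpolate \emph{pointwise}: one checks that
\[
|\xi|^{2\ell}=\big(|\xi|^{2(\ell+1)}\big)^{1-\theta}\big(|\xi|^{-2\gamma}\big)^{\theta}
\quad\Longleftrightarrow\quad
2\ell=2(\ell+1)(1-\theta)-2\gamma\theta,
\]
and the right-hand identity rearranges to $1=\theta(\ell+\gamma+1)$, which is precisely the stated value of $\theta$. Since $\ell,\gamma\geq 0$ we have $\ell+\gamma+1\geq 1$ and hence $\theta\in(0,1]$; consequently the exponents $1/(1-\theta)$ and $1/\theta$ are a conjugate pair, with the endpoint $\theta=1$ (which forces $\ell=\gamma=0$) being the trivial case where the claim degenerates to $\Vert f\Vert_{L^2}\leq\Vert f\Vert_{L^2}$.

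The main step is then to write
\[
\int_{\R^3}|\xi|^{2\ell}|\hat f|^2\,\textup{d}\xi
=\int_{\R^3}\big(|\xi|^{2(\ell+1)}|\hat f|^2\big)^{1-\theta}\big(|\xi|^{-2\gamma}|\hat f|^2\big)^{\theta}\,\textup{d}\xi
\]
and apply H\"older's inequality with conjugate exponents $1/(1-\theta)$ and $1/\theta$, obtaining
\[
\int_{\R^3}|\xi|^{2\ell}|\hat f|^2\,\textup{d}\xi
\leq\Big(\int_{\R^3}|\xi|^{2(\ell+1)}|\hat f|^2\,\textup{d}\xi\Big)^{1-\theta}
\Big(\int_{\R^3}|\xi|^{-2\gamma}|\hat f|^2\,\textup{d}\xi\Big)^{\theta}.
\]
Reinterpreting the three integrals via Plancherel gives $\Vert\nabla^\ell f\Vert_{L^2}^2\leq\Vert\nabla^{\ell+1}f\Vert_{L^2}^{2(1-\theta)}\Vert\Lambda^{-\gamma}f\Vert_{L^2}^{2\theta}$, and taking square roots yields \eqref{Fractional_Gag_Nirenberg}, in fact with constant $C=1$.

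I expect essentially no analytic obstacle here; the only point deserving a word of care is the legitimacy of the manipulation, i.e.\ the hypotheses under which it is performed. One assumes at the outset that the right-hand side of \eqref{Fractional_Gag_Nirenberg} is finite (otherwise there is nothing to prove), which guarantees convergence of both bounding Fourier integrals. The integrands are nonnegative and measurable, so H\"older applies directly. The weight $|\xi|^{-2\gamma}$ is singular at the origin while $|\xi|^{2(\ell+1)}$ grows at infinity, but finiteness of $\Vert\Lambda^{-\gamma}f\Vert_{L^2}$ and $\Vert\nabla^{\ell+1}f\Vert_{L^2}$ controls precisely these two regimes, and their product reconstructs $|\xi|^{2\ell}$ with no loss. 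Thus the whole content of the lemma is the exponent-matching identity of the second paragraph, after which the estimate is a one-line application of H\"older.
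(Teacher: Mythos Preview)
Your proof is correct and is the standard argument via Plancherel and H\"older; the paper does not supply its own proof but simply cites \cite[Lemma~A.4]{Guo_Wang_2012}, whose argument is essentially the same Fourier-side interpolation you wrote out. Your observation that the constant can be taken as $C=1$ is a nice bonus.
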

\end{appendices}

\section*{Acknowledgements}
The author would like to warmly thank Vanja Nikoli\'c for sharing her many insights about this problem and for the many discussions on the  first draft of this work.  


\end{document}